\theoremstyle{plain}
\newtheorem{thm}{Theorem}[section]
\newtheorem{lem}[thm]{Lemma}
\theoremstyle{definition}
\newtheorem{defn}[thm]{Definition}
\newcommand{\Z}{{\mathbb{Z}_+}}
\newcommand{\N}{\mathbb{N}}
\newcommand{\mZ}{\mathbb Z}
\newcommand{\ep}{\varepsilon}
\DeclareMathOperator{\diam}{diam}
\DeclareMathOperator{\supp}{supp}
\begin{document}
\title{A note on mean equicontinuity}
\author[J.~Qiu]{Jiahao Qiu}
\address[J.~Qiu]{Wu Wen-Tsun Key Laboratory of Mathematics, USTC, Chinese Academy of Sciences and
School of Mathematics, University of Science and Technology of China,
Hefei, Anhui, 230026, P.R. China}
\email{qiujh@mail.ustc.edu.cn}

\author[J.~Zhao ]{Jianjie Zhao}
\address[J.~Zhao]{Wu Wen-Tsun Key Laboratory of Mathematics, USTC, Chinese Academy of Sciences and
School of Mathematics, University of Science and Technology of China,
Hefei, Anhui, 230026, P.R. China}
\email{zjianjie@mail.ustc.edu.cn}

\date{\today}

\begin{abstract}
In this note, it is shown that several results concerning mean equicontinuity proved before for minimal systems are actually held
for general topological dynamical systems. Particularly, it turns out that a dynamical system is mean equicontinuous if and only if it is equicontinuous in the mean
if and only if it is Banach (or Weyl) mean equicontinuous if and only if its regionally proximal relation is equal to the Banach  proximal relation.

Meanwhile,
a relation is introduced such that the smallest closed invariant equivalence relation containing this relation induces the maximal mean equicontinuous factor
for any system.
\end{abstract}
\keywords{Mean equicontinuity, equicontinuity in the mean}
\subjclass[2010]{54H20, 37A25}
\maketitle

\section{Introduction}
Throughout this paper, \emph{a topological dynamical system} is a pair $(X,T)$, where $X$ is a
non-empty compact metric space with a metric $d$ and $T$ is a continuous map from $X$ to
itself.

We all know that equicontinuous systems have simple dynamical behaviors.
By the well known Halmos-von Neumann theorem, a transitive equicontinuous system is conjugate to
a minimal rotation on a compact abelian metric group,
and $(X,T,\mu)$ has discrete spectrum, where $\mu$ is the unique Haar measure on $X$.
In this note, we discuss the systems with equicontinuity in the mean sense.

Recall that a dynamical system $(X,T)$ is called \emph{mean equicontinuous} if for every $\ep>0$,
there exists a $\delta>0$ such that whenever $x,y\in X$ with $d(x,y)<\delta$, we have
\[\limsup_{n\to\infty} \bar d_n(x,y)<\ep,\ \text{where}\ \bar d_n(x,y)=\frac{1}{n}\sum_{i=0}^{n-1}d(T^ix,T^iy).\]

A notion called \emph{stable in the mean in the sense of Lyapunov} or simply \emph{mean-L-stable} is introduced by Fomin~\cite{F51}.
We call a dynamical system $(X,T)$ \emph{mean-L-stable}
if for every $\ep>0$, there is a $\delta>0$ such that $d(x,y)<\delta$
implies $d(T^nx,T^ny)<\ep$ for all $n\in\Z$ except a set of
upper density less than $\ep$.
Oxtoby \cite{O52}, Auslander \cite{A59} and Scarpellini \cite{S82} also studied mean-L-stable systems.
 It is easy to see that a dynamical system
is mean-L-stable if and only if it is mean equicontinuous. Answering an open question in \cite{S82}, it was proved
by Li, Tu and Ye in \cite{YL} that a minimal mean equicontinuous system has discrete spectrum. We refer to \cite{Felipe1, Felipe2, Felipe3, GLZ17, HL, Li} for further study
on mean equicontinuity and related subjects.

In the study of a dynamical system with bounded complexity (defined by using the mean metrics), recently
Huang, Li, Thouvenot, Xu and Ye \cite{HL} introduced a notion called \emph{equicontinuity in the mean}.
We say that a dynamical system $(X,T)$ is \emph{equicontinuous in the mean}
if for every $\ep>0$, there exists a $\delta>0$ such that
$\frac{1}{n}\sum_{i=0}^{n-1}d(T^ix,T^iy)<\ep$ for all $n\in \Z$ and
all $x,y\in X$ with $d(x,y)<\delta$.
It was proved in \cite{HL} that for a minimal system the notions of mean equicontinuity and
equicontinuity in the mean are equivalent.
In this note we will show that a dynamical system is mean equicontinuous
if and only if it is equicontinuous in the mean (Theorem \ref{general}).

In \cite{YL} the notion of Banach (or Weyl) mean equicontinuity was introduced, and the authors asked if for a minimal system
Banach mean equicontinuity is equal to mean equicontinuity. This question was answered positively in \cite{DG16}.
In this note we will show that in fact for any system the two notions are equivalent (Theorem \ref{thm:Weyl-mean-equ}). Moreover,
in \cite{YL} the authors showed that if $(X,T)$ is mean equicontinuous, then its regionally proximal relation is equal to the Banach proximal relation.
In this note we will prove that the converse statement is also valid (Theorem \ref{thm:transitive-mean-eq}).

Moreover, we define a notion called
\emph{regionally proximal relation in the mean}
and we show that the mean equicontinuous structure relation
is the smallest closed invariant equivalence relation that contains regionally proximal relation in the mean (Theorem \ref{MAX ME}).

\medskip

The note is organized as follows. In Section 2, the basic notions used in the note are introduced. In Section 3,
among other things we show that mean equicontinuity is equal to equicontinuity in the mean.
In Section 4, we prove that if the regionally proximal relation is equal to Banach proximal relation then the system is mean equicontinuous. In Section 5, we prove the equivalence of mean equicontinuity and Weyl mean equicontinuity.
In the final section, we discuss the question which relation induces the maximal mean equicontinuous factor.

\section{Preliminaries}
In this section we recall some notions and aspects of the theory of topological dynamical systems.
\subsection{Subsets of non-negative integers}
Let $\Z$ ($\N$, $\mZ$, respectively) be the set of all non-negative integers (positive integers, integers, respectively).

Let $F$ be a subset of $\mathbb{Z}_+$ ($\N$, $\mZ$, respectively). Denote by $\#(F)$ the number of elements of $F$.

We say that $F$  has \emph{density} $D(F)$ if the \emph{lower density} of $F$ ($\underline{D}(F)$) is equal to the \emph{upper density} of $F$ ($\overline{D}(F)$),
that is, $D(F)=\overline{D}(F)=\underline{D}(F)$,
where
\[ \underline{D}(F)=\liminf_{n\to\infty} \frac{\#(F\cap[0,n-1])}{n}\]
and
\[ \overline{D}(F)=\limsup_{n\to\infty} \frac{\#(F\cap[0,n-1])}{n}.\]

Similarly,
we say that $F$ has \emph{Banach density} if the \emph{lower Banach density} of $F$ ($BD_*(F)$) is equal to the \emph{upper Banach density} of $F$ ($BD^*(F)$),
that is, $BD(F)=BD_{*}(F)=BD^{*}(F)$,
where,
$$
BD_{*}(F)=\liminf_{N-M \to \infty} \frac{\#(F\cap [M,N])}{N-M+1}
$$
and
$$
BD^{*}(F)=\limsup_{N-M \to \infty} \frac{\#(F\cap [M,N])}{N-M+1}.
$$

\subsection{Compact metric spaces}
Denote by $(X,d)$ a compact metric space.
For $x\in X$ and $\ep>0$, denote $B(x,\ep)=\{y\in X\colon d(x,y)<\ep\}$.
We denote by $\diam(X)$ the diameter of $X$ given by $\diam(X)=\sup_{x, y\in X}d(x,y)$,
the product space $X\times X=\{(x,y)\colon x,y\in X\}$ and the diagonal $\Delta_X=\{(x,x)\colon x\in X\}$.

Let $C(X)$ be the set of continuous complex value functions on $X$  with the supremum norm
$\Vert f\Vert=\sup_{x\in X}|f(x)|$. We denote by $C(X)^*$ the dual space of $C(X)$.

\subsection{Topological dynamics}

Let $(X,T)$ be a dynamical system.
A non-empty closed
invariant subset $Y \subset X$ (i.e., $TY \subset Y$ ) defines naturally a subsystem $(Y,T)$ of $(X,T)$.
A system $(X,T)$ is called minimal if it contains no proper subsystem.
Each point belonging
to some minimal subsystem of $(X,T)$ is called a minimal point.
The orbit of a point $x\in X$ is the set $Orb(x,T)=\{x,Tx,T^2x, \ldots,\}$.
The set of limit points
of the orbit $Orb(x,T)$ is called the $\omega$-limit set of $x$, and is denoted by $\omega(x,T)$.
For $x \in X$ and $U,V \subset X$, put $N(x,U) = \{n \in\Z: T^nx \in U\}$ and
$N(U,V) = \{n\in \Z: U\cap T^{-n}V\neq \emptyset\}$. Recall that a dynamical system $(X,T)$ is called topologically transitive
(or just transitive) if for every two non-empty open subsets $U,V$ of $X$ the set $N(U,V)$
is infinite. Any point with dense orbit is called a transitive point. Denote the set of all
transitive points by $Trans(X,T)$. It is well known that for a transitive system, $Trans(X,T)$
is a dense $G_\delta$ subset of $X$.

For two dynamical systems $(X,T)$ and $(Y,S)$. Let $\pi\colon X\to Y$ be a continuous map.
If $\pi$ is surjective with $\pi\circ T=S\circ \pi$, then we say that $\pi$ is a \emph{factor map},
the system $(Y, T)$ is a \emph{factor} of $(X, T)$ or $(X, T)$ is an \emph{extension} of $(Y, T)$.
If $\pi$ is a homeomorphism, then we say that $\pi$ is a \emph{conjugacy} and
that the dynamical systems $(X,T)$ and $(Y,S)$ are \emph{conjugate}.
By the Halmos and von Neumann theorem (see \cite[Theorem 5.18]{W82}),
a minimal system  is equicontinuous if and only if it is
conjugate to a minimal rotation on a compact abelian metric group.

A pair $(x,y)\in X\times X$ is said to be \emph{proximal}
if for any $\ep>0$, there exists a positive integer $n$ such that $d(T^nx,T^ny)<\ep$.
Let $P(X,T)$ denote the collection of all proximal pairs in $(X,T)$.
If any pair of two points in $X$ is proximal, then we say that the dynamical system $(X,T)$ is \emph{proximal}.

A pair $(x,y)\in X\times X$ is said to be \emph{Banach proximal}
if for any $\ep>0$, $d(T^nx,T^ny)<\ep$ for all $n\in \Z$ except a set of zero Banach density.
Let $BP(X,T)$ denote the collection of all Banach proximal pairs in $(X,T)$.
See \cite{LT14} for a detailed study on Banach proximality.

A pair $(x,y)$ is called \emph{regionally proximal}
if for every $\ep>0$, there exist two points  $x',y'\in X$
with $d(x,x')<\ep$ and $d(y,y')<\ep$, and a positive integer $n$ such that $d(T^nx',T^ny')<\ep$.
Let $Q(X,T)$ be the set of all regionally proximal pairs in $(X,T)$.
Clearly, $Q(X,T)\supset P(X,T)\supset BP(X,T)$.

A factor map  $\pi\colon (X,T)\to (Y,S)$ is called \emph{proximal} (\emph{Banach proximal}, respectively)
 if whenever $\pi(x)=\pi(y)$ the pair $(x,y)$ is proximal (Banach proximal, respectively ).

The factor $\pi\colon(X,T)\to (Y,S)$ is the maximal equicontinuous factor if the system $(Y,T)$
is equicontinuous and for any other factor map $\phi\colon(X,T)\to (Z,U)$,
where $(Z,U)$ is equicontinuous, there exists a factor map $\psi\colon(Y,S)\to (Z,U)$
such that $\phi=\psi \circ \pi$.
It is thus unique up to conjugacy and therefore referred to as the \emph{maximal equicontinuous factor}.
Let $\pi\colon(X,T)\to (Y,S)$ be the factor map to the maximal equicontinuous factor.
The equivalence relation $R_\pi=\{(x,y)\in X\times X: \pi(x)=\pi(y) \}$ is called the \emph{equicontinuous structure relation}.
It is shown in~\cite{EG60} that the equicontinuous structure relation is the smallest closed invariant equivalence relation
containing the regionally proximal relation.

\subsection{Invariant measures}
For a dynamical system  $(X,T)$, we denote by $M(X,T)$ the set of $T$-invariant regular Borel probability measures on $X$.
It is well known that $M(X, T)$ is always nonempty.
We say that $(X,T)$ is \emph{uniquely ergodic} if $M(X,T)$ consists a single measure.
We regard $M(X)$ as a closed convex subset of $C(X)^*$,
equipped with the weak$^*$ topology. Then $M(X)$ is a compact metric space.
An invariant measure is ergodic if and only if it is an extreme point of $M(X,T)$.

Let $\mu \in M(X, T)$. We define the \emph{support} of $\mu$ by
$\supp(\mu)$=\{$x\in X$: $\mu(U)>0$ for any neighborhood $U$ of $x\}$.
The \emph{support} of a dynamical system $(X,T)$, denoted by $\supp(X,T)$,
is the smallest closed subset $C$ of $X$ such that $\mu(C)=1$ for all $\mu\in M(X,T)$.

The action of $T$ on $X$ induces an action on
$M(X)$ in the following way: for $\mu\in M(X)$ we define $T\mu$ by
\[\int_X f(x)\ \textrm{d}T\mu(x)= \int_X f(Tx)\textrm{d}\mu(x), \quad \forall f\in C(X).\]
Hence $(M(X),T)$ is also a topological dynamical system.



Fix a measure space $(X, \mathcal{B}, \mu)$. If $f$ and $g$ are functions on $X$,
we denote by $f\otimes g$ the function on $X\times X$ given by $f\otimes g(x,x')=f(x)g(x')$
and by
$L^{\infty}(X)\otimes L^{\infty}(X) $
we denote the algebra of functions on $X\times X$ that are finite sums of
functions $f\otimes g$ with $f, g \in L^{\infty}(X, \mathcal{B}, \mu)$.
We denote by $\mu_{\Delta}$ the diagonal measure on $X\times X$ given by $\int f(x,x')\textrm{d}\mu_{\Delta}(x,x')=\int f(x,x)\textrm{d}\mu(x)$.
We notice that $\mu_\Delta (A\times B)=\mu(A\cap B)$ for any $A, B \in \mathcal{B}$.

For a dynamical system $(X,T)$, $f\in C(X)$ and $n\in\N$, let
$f_n(x)=\frac{1}{n}\sum_{i=0}^{n-1} f(T^ix).$ The following theorem is well known.

\begin{thm}\cite{O52} \label{thm:unique-ergodic}
Let $(X,T)$ be a dynamical system. Then the following conditions are equivalent:
\begin{enumerate}
  \item $(X,T)$ is uniquely ergodic;
  \item for each $f\in C(X)$, $\{f_n\}_{n=1}^\infty$ converges uniformly on $X$ to a constant;
\item for each $f\in C(X)$, there is a subsequence $\{f_{n_k}\}_{k=1}^\infty$
which converges pointwise on $X$ to a constant.
\item $(X,T)$ contains only one minimal set, and
for each $f\in C(X)$, $\{f_n\}_{n=1}^\infty$ converges uniformly on $X$.
\end{enumerate}
\end{thm}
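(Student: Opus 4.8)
The plan is to close the cyclic chain of implications $(1)\Rightarrow(2)\Rightarrow(3)\Rightarrow(1)$ and then to incorporate condition $(4)$ by proving $(2)\Leftrightarrow(4)$ separately. The analytic engine throughout is the weak$^*$ compactness of $M(X)$ together with the elementary telescoping identity $T\nu-\nu=\frac1n(\delta_{T^n x}-\delta_x)$ for the empirical measures $\nu=\frac1n\sum_{i=0}^{n-1}\delta_{T^ix}$, which forces any weak$^*$ limit of such measures to be $T$-invariant.

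For $(1)\Rightarrow(2)$ I would argue by contradiction. Let $\mu$ be the unique invariant measure, and suppose that for some $f\in C(X)$ the averages $f_n$ do not converge uniformly to the constant $\int f\,d\mu$. Then there are $\ep>0$, points $x_k\in X$ and integers $n_k\to\infty$ with $|f_{n_k}(x_k)-\int f\,d\mu|\geq\ep$. Form the empirical measures $\nu_k=\frac{1}{n_k}\sum_{i=0}^{n_k-1}\delta_{T^i x_k}\in M(X)$, so that $f_{n_k}(x_k)=\int f\,d\nu_k$, and pass to a weak$^*$ convergent subsequence $\nu_k\to\nu$ using compactness of $M(X)$. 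The telescoping identity gives $T\nu=\nu$, so $\nu\in M(X,T)$ and hence $\nu=\mu$ by unique ergodicity; but then $\int f\,d\nu=\lim_k f_{n_k}(x_k)$ contradicts the lower bound $\ep$.

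The implication $(2)\Rightarrow(3)$ is immediate. For $(3)\Rightarrow(1)$, take $\mu_1,\mu_2\in M(X,T)$ and fix $f\in C(X)$; by $(3)$ some subsequence $f_{n_k}$ converges pointwise to a constant $c$. Invariance of each $\mu_j$ gives $\int f_{n_k}\,d\mu_j=\int f\,d\mu_j$, while the uniform bound $\|f_{n_k}\|\leq\|f\|$ and dominated convergence give $\int f_{n_k}\,d\mu_j\to c$. Hence $\int f\,d\mu_1=c=\int f\,d\mu_2$ for every $f\in C(X)$, so $\mu_1=\mu_2$ and the system is uniquely ergodic.

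Finally I would connect $(4)$. The direction $(2)\Rightarrow(4)$ follows because $(2)\Rightarrow(1)$ and a uniquely ergodic system has a single minimal set (two distinct minimal sets would carry two distinct ergodic invariant measures), while $(2)$ already supplies the uniform convergence. For $(4)\Rightarrow(2)$, let $g=\lim_n f_n$ be the uniform limit; it is continuous, and the telescoping identity forces $g\circ T=g$, so $g$ is constant on each orbit and, by continuity, on each orbit closure. Every orbit closure contains a minimal set, which by hypothesis is the unique one, so $g$ takes at every point its single constant value on that minimal set; thus $g$ is constant and $(2)$ holds. I expect the main obstacle to be the step $(1)\Rightarrow(2)$: the \emph{uniformity} of the convergence cannot be read off pointwise and genuinely requires the contradiction argument with empirical measures, the crux being the verification that the weak$^*$ limit is $T$-invariant.
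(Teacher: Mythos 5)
Your proof is correct; note that the paper itself gives no proof of this theorem, quoting it as well known from Oxtoby \cite{O52}. The cycle $(1)\Rightarrow(2)\Rightarrow(3)\Rightarrow(1)$ via empirical measures and the telescoping identity, together with the $T$-invariance of the uniform limit and the unique-minimal-set argument for $(4)$, is exactly the standard argument (cf.\ Walters \cite{W82}), and every step you give checks out.
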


\section{Mean equicontinuity and equicontinuity in the mean}
In this section we will show that mean equicontinuity is equal to equicontinuity in the mean.
Moreover, we will discuss what kinds of minimal sets can appear in a
transitive mean equicontinuous system.
\subsection{Mean equicontinuity and equicontinuity in the mean}
We start with the following characterizations of equicontinuous in the mean systems.
To do this, we need a simple lemma.

\begin{lem}\cite[Lemma 3.2]{YL} \label{same}
Let $(X,T)$ and $(Y,S)$ be two dynamical systems.
Then $(X\times Y,T\times S)$ is mean equicontinuous if and only if
both $(X,T)$ and $(Y,S)$ are mean equicontinuous.
\end{lem}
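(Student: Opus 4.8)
The plan is to exploit the additive structure of the natural product metric. Fix compatible metrics $d_X$ on $X$ and $d_Y$ on $Y$, and equip $X\times Y$ with the metric
\[\rho\big((x,y),(x',y')\big)=d_X(x,x')+d_Y(y,y').\]
Since any two compatible metrics on a compact product are uniformly equivalent and mean equicontinuity is a uniform notion, it is harmless to work with this particular $\rho$. The crucial observation is that the Birkhoff-type average of $\rho$ splits as a sum,
\[\frac{1}{n}\sum_{i=0}^{n-1}\rho\big((T^ix,S^iy),(T^ix',S^iy')\big)=\frac{1}{n}\sum_{i=0}^{n-1}d_X(T^ix,T^ix')+\frac{1}{n}\sum_{i=0}^{n-1}d_Y(S^iy,S^iy'),\]
and I write $\bar\rho_n\big((x,y),(x',y')\big)$ for its left-hand side. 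Everything then follows from this identity together with elementary properties of $\limsup$.

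For the forward implication, assume $(X\times Y,T\times S)$ is mean equicontinuous and fix $\ep>0$; let $\delta>0$ be the modulus supplied by the definition for the product. Given $x,x'\in X$ with $d_X(x,x')<\delta$, choose any $y\in Y$ and apply mean equicontinuity to the pair $\big((x,y),(x',y)\big)$, whose $\rho$-distance equals $d_X(x,x')<\delta$. In the splitting above the second summand then vanishes identically, since the $Y$-coordinates coincide, so the first summand is exactly $\bar d_n(x,x')$ for the system $(X,T)$ and we obtain $\limsup_{n\to\infty}\bar d_n(x,x')<\ep$. As $x,x'$ were arbitrary with $d_X(x,x')<\delta$, this is precisely mean equicontinuity of $(X,T)$; by symmetry $(Y,S)$ is mean equicontinuous as well.

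For the reverse implication, assume both factors are mean equicontinuous and fix $\ep>0$. Choose $\delta_X>0$ witnessing mean equicontinuity of $(X,T)$ for the threshold $\ep/2$, choose $\delta_Y>0$ witnessing that of $(Y,S)$ for $\ep/2$, and set $\delta=\min\{\delta_X,\delta_Y\}$. If $\rho\big((x,y),(x',y')\big)<\delta$, then both $d_X(x,x')<\delta_X$ and $d_Y(y,y')<\delta_Y$; applying the subadditivity inequality $\limsup_n(a_n+b_n)\le\limsup_n a_n+\limsup_n b_n$ to the splitting gives $\limsup_{n\to\infty}\bar\rho_n\big((x,y),(x',y')\big)<\ep/2+\ep/2=\ep$, so $(X\times Y,T\times S)$ is mean equicontinuous.

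I expect no serious obstacle: the whole content is carried by the additivity of $\rho$ and the behaviour of $\limsup$. The one point deserving care is the reverse direction, where one must use the inequality for $\limsup$ of a sum rather than an equality and therefore split $\ep$ evenly between the two coordinates; the forward direction avoids even this subtlety, since fixing a common second coordinate annihilates one of the two averages outright.
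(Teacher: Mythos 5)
Your proof is correct; the paper gives no argument of its own for this lemma (it is quoted from \cite{YL}), and your additive splitting of the Birkhoff average of the sum metric, with the $\ep/2$--$\ep/2$ division and the subadditivity of $\limsup$ in the reverse direction, is exactly the standard argument. The only point you assert rather than prove is that mean equicontinuity is independent of the choice of compatible metric on the product; here this is harmless because the sum metric and the usual max metric are Lipschitz-equivalent, so their Birkhoff averages differ by at most a factor of $2$, but the general claim for arbitrary compatible metrics genuinely requires a small Chebyshev-type argument (of the kind used in the proof of Theorem \ref{thm:unifon-conv-equi}, (1)$\Rightarrow$(2)) and is not a consequence of uniform equivalence alone.
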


\begin{thm}\label{thm:unifon-conv-equi}
Let $(X,T)$ be a dynamical system. Then the following conditions are equivalent:
\begin{enumerate}
  \item $(X,T)$ is equicontinuous in the mean;
  \item for each $f\in C(X\times X)$,
  the sequence $\{\frac{1}{n}\sum_{i=0}^{n-1} f\circ(T^i\times T^i)\}_{n=1}^\infty$
  is uniformly equicontinuous;
  \item for each $f\in C(X\times X)$, the sequence
  $\{\frac{1}{n}\sum_{i=0}^{n-1} f\circ(T^i\times T^i)\}_{n=1}^\infty$ is uniformly convergent to a
  $T\times T$-invariant continuous  function $f^*\in C(X\times X)$.
\end{enumerate}
\end{thm}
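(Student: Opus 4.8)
The plan is to phrase everything in terms of the composition operator $U\colon C(X\times X)\to C(X\times X)$, $Uf=f\circ(T\times T)$, and its Ces\`aro averages $\phi_nf=\frac1n\sum_{i=0}^{n-1}U^if$; since $\|U^if\|\le\|f\|$ we have $\|\phi_nf\|\le\|f\|$ for all $n$, so $\|\phi_n\|\le1$. I would then prove the cycle $(1)\Rightarrow(2)\Rightarrow(3)\Rightarrow(1)$.

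For $(1)\Rightarrow(2)$ I fix $f\in C(X\times X)$ and $\ep>0$, and use that $f$ is uniformly continuous and bounded, say $\|f\|\le M$; choose $\eta>0$ so that two points within $\eta$ in each coordinate force the values of $f$ to differ by less than $\ep/2$. The key estimate splits the index set $\{0,\dots,n-1\}$ into the \emph{good} indices $i$ with $d(T^ix,T^ix')<\eta$ and $d(T^iy,T^iy')<\eta$, on which the terms of $\phi_nf(x,y)-\phi_nf(x',y')$ are small, and the remaining \emph{bad} indices. By Markov's inequality the proportion of bad indices is at most $\eta^{-1}\bar d_n(x,x')+\eta^{-1}\bar d_n(y,y')$, while each bad term contributes at most $2M$. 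Equicontinuity in the mean supplies a single $\delta>0$, independent of $n$, making $\bar d_n(x,x')$ and $\bar d_n(y,y')$ as small as needed whenever $d(x,x')<\delta$ and $d(y,y')<\delta$; substituting into the splitting bounds $|\phi_nf(x,y)-\phi_nf(x',y')|$ by $\ep$ uniformly in $n$, i.e. $\{\phi_nf\}$ is uniformly equicontinuous.

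The implication $(2)\Rightarrow(3)$ is the heart of the argument and the step I expect to be the main obstacle, since equicontinuity by itself yields convergence only along subsequences and the real work is to upgrade this to convergence of the whole sequence to an invariant limit. For fixed $f$, condition (2) together with $\|\phi_nf\|\le\|f\|$ makes $\{\phi_nf\}$ relatively compact in $C(X\times X)$ by the Arzel\`a--Ascoli theorem, so some subsequence $\phi_{n_k}f\to g$ uniformly; the telescoping identity $U\phi_nf-\phi_nf=\frac1n(U^nf-f)\to0$ gives $Ug=g$, so $g$ is $T\times T$-invariant. To force uniqueness I would set $N=\overline{(I-U)C(X\times X)}$ and record two facts: $\phi_nh=h$ for every $U$-invariant $h$, and $\phi_nh\to0$ uniformly for every $h\in N$ (this holds on the range of $I-U$ by a similar telescoping and extends to its closure $N$ because $\|\phi_n\|\le1$). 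The decomposition then follows from the elementary identity $f-U^if=(I-U)(f+Uf+\dots+U^{i-1}f)$, which shows $f-\phi_nf\in(I-U)C(X\times X)$; letting $k\to\infty$ gives $f-\phi_{n_k}f\to f-g\in N$, so $f=g+(f-g)$ with $g$ invariant and $f-g\in N$, whence $\phi_nf=g+\phi_n(f-g)\to g$ uniformly for the full sequence. Setting $f^*=g$ yields (3).

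Finally, for $(3)\Rightarrow(1)$ I specialize to $f=d$, which is continuous on $X\times X$ and satisfies $\phi_nd=\bar d_n$. By (3), $\bar d_n\to d^*$ uniformly with $d^*$ continuous and $T\times T$-invariant, and since each $\bar d_n$ vanishes on the diagonal so does $d^*$. A compactness argument then shows that a continuous function vanishing on $\Delta_X$ is small near $\Delta_X$: given $\ep$ there is $\delta_0$ with $d^*(x,y)<\ep/2$ whenever $d(x,y)<\delta_0$. Combining uniform convergence (which handles all large $n$ at once) with the same observation applied to the finitely many initial averages $\bar d_n$ (each continuous and zero on the diagonal) produces a single $\delta>0$ with $\bar d_n(x,y)<\ep$ for all $n$ whenever $d(x,y)<\delta$, which is exactly equicontinuity in the mean and closes the cycle.
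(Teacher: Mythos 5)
Your proposal is correct and follows essentially the same route as the paper: the $(1)\Rightarrow(2)$ step is the identical Markov-type splitting into good and bad indices, and your $(2)\Rightarrow(3)$ (Arzel\`a--Ascoli plus the mean-ergodic decomposition $C(X\times X)=\{U\text{-invariant}\}\oplus\overline{(I-U)C(X\times X)}$) and $(3)\Rightarrow(1)$ (applying the limit to $f=d$ and using that a continuous function vanishing on $\Delta_X$ is uniformly small near $\Delta_X$) are exactly the arguments the paper defers to the cited proof of \cite[Theorem 3.3]{YL}. The only difference is that you write out in full the implications the paper handles by citation.
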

\begin{proof} We only present the proof (1) implies (2) and the rest
is similar to the proof of \cite[Theorem 3.3]{YL}.

To make the idea of the proof clearer, when proving (1)$\Rightarrow$(2),
we assume $f\in C(X)$ instead of $f\in C(X\times X)$, because if $(X,T)$ is equicontinuous in the mean
if and only if so is $(X\times X,T\times T)$.

(1)$\Rightarrow$(2)
Fix $f\in C(X)$ and $\ep>0$.
By continuity of $f$,
there exists $\eta\in(0,\frac{\ep}{2\Vert f\Vert})$ such that if $x$, $y\in X$ with $d(x,y)<\eta$
then $|f(x)-f(y)|<\frac{\ep}{2}$.
As $(X,T)$ is equicontinuous in the mean,
there is $\delta\in(0,\eta)$ such that if $x,y\in X$ with $d(x,y)<\delta$, one has
\[\frac{1}{n}\sum_{i=0}^{n-1}d(T^ix,T^iy)<\eta^2,\ n=1,2,\dotsc.\]
For every $n\in \N$ and $x,y\in X$ with $d(x,y)<\delta$, let
\[I_n(x,y)=\{i\in [0,n-1]\colon d(T^ix,T^iy)\geq \eta\}.\]
Then $\#(I_n(x,y))\leq \eta n$.
So for every $n\in \N$ and $x,y\in X$ with $d(x,y)<\delta$, we have
\begin{align*}
\Bigl\vert\frac{1}{n}\sum_{i=0}^{n-1} f(T^ix)-\frac{1}{n}\sum_{i=0}^{n-1} f(T^iy)\Bigr\vert
&\leq \frac{1}{n} \sum_{i=0}^{n-1}\bigl\vert f(T^ix)-f(T^iy)\bigr\vert\\
&\leq \frac{1}{n}\Bigl(\sum_{i\in I_n(x,y)} 2 \Vert f\Vert+
 \sum_{i\in [0,n-1]\setminus I_n(x,y)}\bigl\vert f(T^ix)-f(T^iy)\bigr\vert\Big)\\
 &\leq 2\eta \Vert f\Vert +\frac{\ep}{2}<\ep.
\end{align*}
This shows that $\{\frac{1}{n}\sum_{i=0}^{n-1} f\circ T^i\}_{n=1}^\infty$
is uniformly equicontinuous.
\end{proof}

Before proving the main result of this section we give a proof of a result in \cite{O52} which is outlined there.
We need the following lemmas.

\begin{lem}\label{mean-eq-minimal}\cite{HL}
Let $(X,T)$ be  a minimal  dynamical system.
Then $(X,T)$ is mean equicontinuous
if and only if it is equicontinuous in the mean.
\end{lem}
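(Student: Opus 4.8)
The easy direction requires no minimality: if $(X,T)$ is equicontinuous in the mean, then the uniform bound $\bar d_n(x,y)<\ep$ valid for every $n$ survives passage to $\limsup_{n\to\infty}$, so $(X,T)$ is mean equicontinuous. The whole content lies in the converse, and the plan is to argue by contradiction. From a failure of equicontinuity in the mean I would extract a pair lying arbitrarily close to the diagonal yet carrying a long orbit-window on which the averaged distance stays bounded below; the goal is then to use minimality to promote such a window into a genuine lower bound for $\limsup_n\bar d_n$, which directly contradicts mean equicontinuity.

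In detail, suppose $(X,T)$ is mean equicontinuous but not equicontinuous in the mean. Then there are $\ep_0>0$ and pairs $x_k,y_k\in X$ with $d(x_k,y_k)\to 0$ together with integers $n_k$ such that $\bar d_{n_k}(x_k,y_k)\ge \ep_0$. Uniform continuity of each $T^i$ forces $\bar d_n(x_k,y_k)\to 0$ as $k\to\infty$ for every fixed $n$, so necessarily $n_k\to\infty$, and after passing to a subsequence $x_k,y_k\to z$ for a common limit $z$. Applying mean equicontinuity with this $\ep_0$ gives $\delta>0$ such that $d(x,y)<\delta$ implies $\limsup_n\bar d_n(x,y)<\ep_0/2$; since $d(x_k,y_k)<\delta$ for large $k$, each bad window $[0,n_k)$ is a transient, intermediate-scale excursion that must be compensated by long stretches on which the orbit of $(x_k,y_k)$ returns close to the diagonal. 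The plan is to exploit minimality of $(X,T)$—equivalently, the uniform recurrence of a pair inside a minimal subset $W$ of $(X\times X,T\times T)$, which is again mean equicontinuous by Lemma \ref{same}—to reproduce such an excursion along a syndetic family of time windows on a single orbit, thereby forcing the Cesàro averages over arbitrarily long horizons to stay above $\ep_0/2$ and producing a pair $(p,q)$ with $d(p,q)<\delta$ yet $\limsup_n\bar d_n(p,q)\ge \ep_0/2$, contradicting the previous line.

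The main obstacle is precisely this amplification step, where minimality must be used in an essential and delicate way. A single near return $d(T^mp,p)<\gamma$ does not by itself control the finite average $\bar d_m(T^mp,p)$, since there is no equicontinuity to propagate closeness along an orbit segment, so the excursion cannot be copied verbatim at each return time. The resolution is to work only with long-horizon averages, where mean equicontinuity does give control—for pairs within $\delta$ the limsup of $\bar d_n$ is $<\ep_0/2$—and to combine this mean-shadowing with the uniform recurrence supplied by the minimal subset $W$ so that the accumulated contribution of the excursions cannot be averaged away. Here the convergence criteria of Theorem \ref{thm:unique-ergodic} and the equivalences of Theorem \ref{thm:unifon-conv-equi} are the natural tools for converting ``one long window with large average'' into ``limit average large'' on the minimal piece $W$, after which the direct contradiction with mean equicontinuity closes the argument. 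Ensuring that the near-diagonal property survives the passage to $W$, so that the bad pair really does lie within $\delta$ of the diagonal, is the crux, and it is exactly the point at which minimality is indispensable.
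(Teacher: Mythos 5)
The paper gives no proof of this lemma --- it is quoted from \cite{HL} --- so your attempt can only be judged on its own merits, and on those merits it is not yet a proof. The easy direction and the initial reduction are fine: extracting $x_k,y_k\to z$, $n_k\to\infty$ with $\bar d_{n_k}(x_k,y_k)\ge\ep_0$ is exactly the right setup. But the entire content of the lemma is the ``amplification step'' that you yourself flag as the main obstacle, and you never carry it out: you assert that uniform recurrence in a minimal subset $W$ of $(X\times X,T\times T)$ lets you ``reproduce the excursion along a syndetic family of windows'' and thereby produce a pair $(p,q)$ with $d(p,q)<\delta$ and $\limsup_n\bar d_n(p,q)\ge\ep_0/2$, but no such pair is constructed and no mechanism is given for why the excursions ``cannot be averaged away.'' Naming the crux is not the same as resolving it; as written, the hard direction is a plan, not an argument.

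There is also a circularity problem with the tools you propose for closing the gap. The nontrivial implications of Theorem \ref{thm:unifon-conv-equi} take equicontinuity in the mean as their \emph{hypothesis}, which is precisely the conclusion you are trying to reach; and Theorem \ref{thm:unique-ergodic} requires unique ergodicity, which in this paper is deduced (Theorem \ref{transitive}, via Lemma \ref{minimal-point}) from the very lemma under discussion. For a workable argument, look instead at how the paper handles the general case in Theorem \ref{general}: form the empirical measures $\mu_k=\frac{1}{n_k}\sum_{i=0}^{n_k-1}\delta_{(T^ix_k,T^iy_k)}$, pass to a weak$^*$ limit $\mu$ with $\int d\,\mathrm{d}\mu\ge\ep_0$ (so $\mu$ charges the complement of $\Delta_X$), locate a \emph{minimal} point $(u,v)\notin\Delta_X$ in $\supp(\mu)$, and derive the contradiction quantitatively from $\delta=d\bigl(\overline{Orb((z,z),T\times T)},\overline{Orb((u,v),T\times T)}\bigr)>0$ together with mean equicontinuity of $(X\times X,T\times T)$. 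Even then, for the minimal case one must justify the existence of a minimal point off the diagonal without invoking Lemma \ref{minimal-point} (whose proof depends on the present lemma); supplying that justification is exactly the work your sketch leaves undone.
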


\begin{lem}\label{lem:mean-eq-BP}\cite[Theorem 3.5]{YL}
Let $(X,T)$  be a dynamical system. If $(X,T)$ is mean equicontinuous, then $Q(X,T)=P(X,T)=BP(X,T)$ and
it is a closed invariant equivalence relation.
\end{lem}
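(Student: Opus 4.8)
The plan is to route all three relations through the \emph{invariant Besicovitch pseudometric}
\[
D(x,y)=\limsup_{n\to\infty}\bar d_n(x,y),
\]
and to study its zero set $E=\{(x,y)\in X\times X:D(x,y)=0\}$. Two formal facts come for free. Each $\bar d_n$ is a pseudometric and $\limsup$ is subadditive, so $D$ is a pseudometric and $E$ is an equivalence relation; and since passing from $(x,y)$ to $(Tx,Ty)$ alters the $n$-th Cesàro average by at most $2\diam(X)/n$, we get $D(Tx,Ty)=D(x,y)$, so $E$ is $T\times T$-invariant. The only place the hypothesis enters is through the reformulation: mean equicontinuity says \emph{exactly} that $D(x,y)\to 0$ as $d(x,y)\to 0$, uniformly, i.e.\ $D$ is uniformly continuous at the diagonal. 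I will use this twice.

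First I would prove $Q(X,T)=P(X,T)=E$ together with closedness. Closedness is immediate from uniform continuity of $D$: if $(x_k,y_k)\to(x,y)$ with $D(x_k,y_k)=0$, then $D(x,x_k)\to 0$ and $D(y,y_k)\to 0$, and the triangle inequality forces $D(x,y)=0$. For the equalities I would run the chain $E\subseteq P\subseteq Q\subseteq E$. The inclusion $E\subseteq P$ holds because $D(x,y)=0$ forces $\inf_n d(T^nx,T^ny)=0$; the inclusion $P\subseteq Q$ is recorded in Section~2. For $Q\subseteq E$, fix $(x,y)\in Q$ and $\ep>0$, choose $\delta$ from the uniform continuity of $D$, and pick $x',y'$ and $n$ with $d(x,x'),d(y,y'),d(T^nx',T^ny')<\delta$; then $D(x,x'),D(y,y')<\ep$ and, by invariance, $D(x',y')=D(T^nx',T^ny')<\ep$, whence $D(x,y)<3\ep$. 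Letting $\ep\to 0$ gives $(x,y)\in E$. Combined with invariance, this already yields the ``closed invariant equivalence relation'' assertion and the identity $Q=P$.

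The genuine work is the remaining equality $P=BP$; since $BP\subseteq P$ always, the task is $P\subseteq BP$, i.e.\ upgrading the density information carried by $E=P$ to the uniform-over-windows Banach-density information defining $BP$. Here is the device I would use. Fix $(x,y)\in P=E$ and let $K=\overline{\{(T^nx,T^ny):n\in\Z\}}\subseteq X\times X$ be its orbit closure under $T\times T$. Because $E$ is closed and invariant, $K\subseteq E$, so $\bar d_n\to 0$ pointwise on $K$. Consequently, for every $\nu\in M(K,T\times T)$ the invariance of $\nu$ gives $\int_K d\,d\nu=\int_K \bar d_n\,d\nu$ for all $n$, and bounded convergence sends the right-hand side to $0$; thus $\int_K d\,d\nu=0$ for every invariant measure. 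The uniform ergodic theorem—if a continuous function has the same integral against all invariant measures, its Cesàro averages converge uniformly to that value, proved in one line by testing a putative bad sequence of empirical measures against a weak$^*$ limit point, which is invariant—then yields $\sup_{w\in K}\bar d_n(w)\to 0$. Evaluating at $w=(T^Mx,T^My)$ and using $\bar d_n(T^Mx,T^My)=\frac1n\sum_{i=M}^{M+n-1}d(T^ix,T^iy)$, this is exactly the statement that the uniform-window (Weyl) averages of the sequence $(d(T^ix,T^iy))_i$ tend to $0$; Markov's inequality then gives $BD^*(\{i:d(T^ix,T^iy)\ge\ep\})=0$ for every $\ep>0$, i.e.\ $(x,y)\in BP$.

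The main obstacle is precisely this last passage from Cesàro (density) control to Banach-density control: mean equicontinuity only constrains $\limsup_n \bar d_n$ over the windows $[0,n)$, whereas $BP$ demands uniformity over all windows $[M,N)$. The pseudometric bookkeeping in the first two paragraphs is routine; the crux is the orbit-closure argument, whose essential input is that closedness of $E$ (hence mean equicontinuity) forces every invariant measure on $K$ to be supported where $d$ vanishes, after which the uniform ergodic theorem converts pointwise into uniform convergence.
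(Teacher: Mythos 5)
Your argument is correct. Note that the paper itself does not prove this lemma at all: it is quoted verbatim from \cite[Theorem 3.5]{YL}, so the only meaningful comparison is with that source and with the techniques the present paper uses elsewhere. Your treatment of $Q=P=E$ (with $E$ the zero set of the Besicovitch pseudometric $D=\limsup_n\bar d_n$) is the standard one and matches \cite{YL}: mean equicontinuity is exactly uniform continuity of $D$ at the diagonal, which gives closedness of $E$ and the inclusion $Q\subseteq E$ by the $3\ep$-triangle argument, while $E\subseteq P\subseteq Q$ is routine. Where you genuinely diverge is the hard inclusion $P\subseteq BP$. The proof in \cite{YL} is combinatorial, working directly with upper Banach density of the return sets $\{n:d(T^nx,T^ny)\ge\ep\}$ via the mean-L-stable formulation; you instead pass to the orbit closure $K$ of $(x,y)$ in $X\times X$, observe that $K\subseteq E$ forces $\int_K d\,\mathrm{d}\nu=0$ for every invariant $\nu$ (by invariance of $\nu$ plus dominated convergence applied to $\bar d_n\to 0$ pointwise on $K$), and then invoke the uniform ergodic theorem to get $\sup_{w\in K}\bar d_n(w)\to 0$, which evaluated at $w=(T^Mx,T^My)$ is precisely Weyl-type control over all windows, and Markov's inequality finishes. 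All steps check out; the one point worth flagging is that $K\subseteq E$ relies on the closedness of $E$, which itself uses mean equicontinuity, so the logical order (closedness first, then $P\subseteq BP$) is essential and you do respect it. This ergodic-theoretic route is in fact the same mechanism the paper deploys later for Theorem \ref{thm:Weyl-mean-equ} (unique ergodicity of orbit closures in $X\times X$ upgrading Ces\`aro averages to uniform-window averages), so your proof has the pleasant side effect of essentially containing the paper's Section 5 as a corollary; what it costs is the machinery of invariant measures where \cite{YL} gets by with elementary density bookkeeping.
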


\begin{lem} \cite{LT14} \label{PRBD}
Let $(X,T)$ be a dynamical system. Then the support of $(X,T)$ is the smallest closed subset $K$
of $X$ such that for every $x\in X$ and every open neighborhood $U$ of $K$, $N(x,U)$ has Banach density one.
\end{lem}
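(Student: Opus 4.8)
The plan is to prove the two halves of the minimality statement separately: first that $S:=\supp(X,T)$ itself has the stated return-time property, and then that any closed set $K$ enjoying this property must contain $S$. Together these show $S$ is the smallest such closed set.

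For the first half, fix a point $x\in X$ and an open neighborhood $U\supseteq S$, and set $F:=X\setminus U$, a closed set disjoint from $S$. Since $N(x,U)=\Z\setminus N(x,F)$, it suffices to show that $N(x,F)$ has upper Banach density zero. Suppose instead that $BD^*(N(x,F))=c>0$. Then there are intervals $[M_k,N_k]$ with $L_k:=N_k-M_k+1\to\infty$ along which $\#(N(x,F)\cap[M_k,N_k])\ge cL_k$. I would form the empirical measures $\mu_k=\frac1{L_k}\sum_{n=M_k}^{N_k}\delta_{T^nx}\in M(X)$ and, using the weak$^*$ compactness of $M(X)$, pass to a subsequence with $\mu_k\to\mu$. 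Because $L_k\to\infty$ one has $|\int f\circ T\,\mathrm d\mu_k-\int f\,\mathrm d\mu_k|\le 2\Vert f\Vert/L_k\to0$ for every $f\in C(X)$, so the limit $\mu$ is $T$-invariant, i.e. $\mu\in M(X,T)$. As $F$ is closed, upper semicontinuity of mass on closed sets gives $\mu(F)\ge\limsup_k\mu_k(F)\ge c>0$; but $\mu(S)=1$ by the definition of $\supp(X,T)$ and $F\cap S=\emptyset$, so $\mu(F)=0$, a contradiction.

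For the second half, let $K$ be closed with the return-time property. I would show $\mu(K)=1$ for every $\mu\in M(X,T)$, which by the defining property of $\supp(X,T)$ forces $S\subseteq K$. If some $\mu\in M(X,T)$ had $\mu(K)<1$, then $\supp(\mu)\not\subseteq K$, so there is $z\in\supp(\mu)\setminus K$; since $K$ is closed I may choose a ball $V=B(z,r)$ with $\overline V\cap K=\emptyset$, and $\mu(V)>0$ as $z\in\supp(\mu)$. By the ergodic decomposition there is an ergodic $\nu\in M(X,T)$ with $\nu(V)>0$. Now $U:=X\setminus\overline V$ is an open neighborhood of $K$, so the hypothesis gives that $N(x,\overline V)=\Z\setminus N(x,U)$ has Banach density zero, hence upper density zero, for every $x\in X$. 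On the other hand, Birkhoff's pointwise ergodic theorem applied to $\nu$ yields $\nu$-a.e. $x$ with $\frac1n\sum_{i=0}^{n-1}\mathbf 1_V(T^ix)\to\nu(V)>0$; since $\mathbf 1_V\le\mathbf 1_{\overline V}$, this gives $\overline D(N(x,\overline V))\ge\nu(V)>0$ for such $x$, contradicting the preceding sentence.

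I expect the main obstacle to be the choice of averaging windows in the first half: Banach density is governed by arbitrarily long intervals, so the invariant limit measure must be manufactured from empirical averages over the intervals $[M_k,N_k]$ that witness positive upper Banach density, rather than from initial segments $[0,n-1]$, and the hypothesis $L_k\to\infty$ is exactly what is needed to make the limit $T$-invariant. In the second half the only points requiring care are passing to an ergodic component so that Birkhoff's theorem supplies a genuine $V$-generic point, and using that Banach density dominates ordinary upper density.
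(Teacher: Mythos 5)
The paper does not prove this lemma at all: it is imported verbatim from \cite{LT14} as a quoted result, so there is no in-paper argument to compare against. Your proof is correct and self-contained, and it is the standard route one would expect: for the inclusion ``$\supp(X,T)$ has the property'' you convert a putative positive upper Banach density of $N(x,X\setminus U)$ into a $T$-invariant weak$^*$ limit of empirical measures along the witnessing windows $[M_k,N_k]$, use the portmanteau inequality $\mu(F)\ge\limsup_k\mu_k(F)$ for the closed set $F=X\setminus U$, and contradict $\mu(\supp(X,T))=1$; for minimality you reduce to an ergodic component charging a ball $V$ off $K$ and let Birkhoff's theorem produce a point whose visits to $\overline V$ have positive ordinary upper density, which is dominated by upper Banach density. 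Your closing remarks correctly identify the two places where care is needed (windows of length $L_k\to\infty$ rather than initial segments, and passing to an ergodic component before invoking Birkhoff). The only cosmetic slip is the inequality $\#(N(x,F)\cap[M_k,N_k])\ge cL_k$: the $\limsup$ definition of $BD^*$ only yields $\ge (c-\varepsilon_k)L_k$ with $\varepsilon_k\to0$ (or $\ge \tfrac{c}{2}L_k$ for large $k$), which is all the argument needs. No gap.
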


Now we are ready to show
\begin{thm}\label{transitive}
Let $(X,T)$ be a dynamical system.
If $(X,T)$ is mean equicontinuous,
then for every $x\in X$, $(\overline{\text{Orb}(x,T)},T)$ is uniquely ergodic.
In particular,
if $(X,T)$ is also transitive, then $(X,T)$ is uniquely ergodic.
\end{thm}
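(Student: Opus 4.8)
The plan is to pass to an orbit closure and pin down its unique invariant measure via the support characterisation of Lemma~\ref{PRBD}. Fix $x\in X$ and set $Y=\overline{\text{Orb}(x,T)}$. Since mean equicontinuity is inherited by subsystems (the same $\delta$ works for pairs in $Y$), $(Y,T)$ is mean equicontinuous, and $x$ is a transitive point of $(Y,T)$. By Lemma~\ref{lem:mean-eq-BP}, $Q(Y,T)=P(Y,T)=BP(Y,T)$ is a closed invariant equivalence relation; as the equicontinuous structure relation $R_\pi$ is the smallest closed invariant equivalence relation containing $Q(Y,T)$, we get $R_\pi=P(Y,T)=BP(Y,T)$, so the fibres of the factor map $\pi\colon(Y,T)\to(Y_{eq},T)$ onto the maximal equicontinuous factor are exactly the proximal classes. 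Because $Y$ is transitive, $Y_{eq}$ is a transitive equicontinuous system, hence minimal by the Halmos--von Neumann theorem; in particular $\pi(Z)=Y_{eq}$ for every nonempty closed invariant $Z\subseteq Y$.

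First I would show that $Y$ has a unique minimal set. At least one exists, since every nonempty compact invariant set contains one. If $M_1,M_2\subseteq Y$ are minimal, pick $m_1\in M_1$; as $\pi(M_2)=Y_{eq}\ni\pi(m_1)$, there is $m_2\in M_2$ with $\pi(m_2)=\pi(m_1)$, so $(m_1,m_2)\in R_\pi=P(Y,T)$. Proximality yields $n_k$ with $d(T^{n_k}m_1,T^{n_k}m_2)\to0$, and after passing to a subsequence $T^{n_k}m_1\to p$ and $T^{n_k}m_2\to p$ for a common $p\in M_1\cap M_2$, whence $M_1=M_2$. Write $M$ for this unique minimal set. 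Since $(M,T)$ is minimal and mean equicontinuous, Lemma~\ref{mean-eq-minimal} makes it equicontinuous in the mean, so by Theorem~\ref{thm:unifon-conv-equi} the Birkhoff averages of every $g\in C(M)$ converge uniformly to a continuous $T$-invariant limit, which is constant by minimality; Theorem~\ref{thm:unique-ergodic}(2) then shows $(M,T)$ is uniquely ergodic, with unique invariant measure $\nu$.

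Next I would compute $\supp(Y,T)$. Given $z\in Y$, choose $m\in M$ with $\pi(m)=\pi(z)$ (possible since $\pi(M)=Y_{eq}$), so that $(z,m)\in P(Y,T)=BP(Y,T)$. For an open neighbourhood $U$ of $M$, fix $\ep>0$ with $\{w\in Y\colon d(w,M)<\ep\}\subseteq U$; Banach proximality gives $d(T^nz,T^nm)<\ep$ off a set of zero Banach density, and since $T^nm\in M$ this forces $T^nz\in U$ there. Thus $N(z,U)$ has Banach density one for every $z$ and every neighbourhood $U$ of $M$, so Lemma~\ref{PRBD} gives $\supp(Y,T)\subseteq M$. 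Consequently every $\mu\in M(Y,T)$ has $\mu(M)=1$, i.e.\ $\mu\in M(M,T)=\{\nu\}$, so $(Y,T)$ is uniquely ergodic; the transitive case follows by choosing $x$ with $\overline{\text{Orb}(x,T)}=X$. The main obstacle is the step isolating the unique minimal set together with the full Banach-density recurrence to it: both rest essentially on the identification $P=BP$ of Lemma~\ref{lem:mean-eq-BP} and on the fibres of $\pi$ being proximal classes, after which Lemma~\ref{PRBD} turns recurrence into the measure-theoretic conclusion.
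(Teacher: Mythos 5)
Your overall architecture coincides with the paper's: isolate a unique minimal set $M$ in $Y=\overline{Orb(x,T)}$, obtain unique ergodicity of $(M,T)$ from Lemma~\ref{mean-eq-minimal} together with Theorems~\ref{thm:unique-ergodic} and~\ref{thm:unifon-conv-equi}, and then force $\supp(Y,T)\subseteq M$ by showing every $z$ is Banach proximal to a point of $M$ and invoking Lemma~\ref{PRBD}. The gap lies in how you manufacture the proximal pairs. You route everything through the maximal equicontinuous factor $\pi\colon Y\to Y_{eq}$ and the two claims (i) $R_\pi=Q(Y,T)=P(Y,T)=BP(Y,T)$ and (ii) $Y_{eq}$ is minimal ``by Halmos--von Neumann'', so that $\pi(M_2)=Y_{eq}$ and $\pi(M)=Y_{eq}$. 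Claim (ii) is false in this generality: the Halmos--von Neumann theorem quoted in the paper concerns \emph{minimal} systems, and a transitive equicontinuous system need not be minimal when $T$ is not invertible. For instance, $X=\{0\}\cup\{1/n\colon n\in\N\}$ with $T(1/n)=1/(n+1)$ and $T(0)=0$ is transitive (the point $1$ has dense orbit) and non-expansive, hence equicontinuous, yet its unique minimal set is $\{0\}\neq X$. The same example shows that (i) is delicate outside the minimal setting: there $Q(X,T)=P(X,T)=X\times X$ while the system is its own maximal equicontinuous factor, so $R_\pi=\Delta_X$ does not contain $Q(X,T)$; the Ellis--Gottschalk identification is a theorem about minimal systems. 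Consequently both places where you need a point of a minimal set lying over $\pi(m_1)$ (respectively over $\pi(z)$) are unsupported.

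The paper avoids this entirely by invoking the Auslander--Ellis theorem: for the transitive point $x$ and any minimal set $M_i\subseteq\overline{Orb(x,T)}$ there is $y_i\in M_i$ with $(x,y_i)$ proximal, and likewise every $z\in Y$ is proximal to some $y\in M$; Lemma~\ref{lem:mean-eq-BP} upgrades these to Banach proximal pairs, after which your arguments go through essentially verbatim (two Banach-density-one return sets must intersect, forcing $M_1=M_2$, and Banach proximality of $(z,y)$ gives $BD(N(z,U))=1$ for every neighbourhood $U$ of $M$). If you replace your factor-map step by this appeal to Auslander--Ellis, the remainder of your proof is correct and follows the paper's route.
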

\begin{proof}
Without loss of generality, we can assume that $X=\overline{Orb(x,T)}$.

If  $M_1,M_2$ are two minimal sets in $(X,T)$.
By Auslanser-Ellis theorem, there exist $y_1\in M_1$ and $y_2\in M_2$
such that $(x,y_1)$ and  $(x,y_2)$ are both proximal.
For a given $\ep>0$,set
$$
A_1=\{n\in\Z: d(T^nx,T^ny_1)<\ep/2\}\ \text{and}\ A_2=\{n\in\Z: d(T^nx,T^ny_2)<\ep/2\}.
$$
By Lemma~\ref{lem:mean-eq-BP}, $A_1\cap A_2\not=\emptyset$ which implies that $(y_1,y_2)$ is proximal.
As $y_1$ and $y_2$ are minimal points, then their orbit closures
are equal which deduces that $M_1=M_2$.
So there is only one minimal set in $(X,T)$, denoted by $M$.

It is clear that $(M,T)$ is also mean equicontinuous.
By Lemma \ref{mean-eq-minimal}, $(M,T)$ is equicontinuous in the mean.
Then $(M,T)$ is  uniquely ergodic by Theorem  \ref{thm:unique-ergodic} and Theorem \ref{thm:unifon-conv-equi}.
For every $z\in X$, by Auslanser-Ellis theorem again,
there  exists a point $y\in M$ such that $(z,y)$ is proximal.
By Lemma~\ref{lem:mean-eq-BP}, $(z,y)$ is a Banach proximal piont.
So for every open neighborhood $U$ of $M$ and any $z\in X$,
$N(z,U)$ has Banach density one. Then by Lemma \ref{PRBD} we have $\supp(X,T)\subset M$.
As $(M,T)$ is  uniquely ergodic, so is $(X,T)$.
\end{proof}

Now we begin to prove the main result of this section. We need the following lemma.

\begin{lem}\label{minimal-point}
Let $(X,T)$ be mean equicontinuous system and $\nu$ be an ergodic measure on $X$,
then every point of $\text{supp} (\nu)$ is minimal.
\end{lem}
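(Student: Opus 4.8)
The plan is to show that $\supp(\nu)$, regarded as a subsystem, is itself minimal; since a minimal point is by definition a point lying in a minimal subsystem, this immediately yields that every point of $\supp(\nu)$ is minimal. Write $Y=\supp(\nu)$. First I would verify that $(Y,T)$ is a genuine subsystem, i.e.\ $TY\subseteq Y$: this follows from the $T$-invariance of $\nu$, since for $x\in Y$ and any neighborhood $V$ of $Tx$ one has $\nu(V)=\nu(T^{-1}V)>0$, so $Tx\in Y$. Being a closed subset of $X$, the system $(Y,T)$ inherits mean equicontinuity from $(X,T)$.

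Next I would use ergodicity of $\nu$ to produce a transitive point. By the Birkhoff ergodic theorem, $\nu$-almost every point is generic for $\nu$, and a generic point necessarily has orbit closure equal to $\supp(\nu)=Y$ (its orbit enters every open set of positive $\nu$-measure with positive frequency); hence $(Y,T)$ is transitive. Applying Theorem~\ref{transitive} to the transitive mean equicontinuous system $(Y,T)$, I conclude that $(Y,T)$ is uniquely ergodic. Since $\nu(Y)=1$, the measure $\nu$ lies in $M(Y,T)$, and by unique ergodicity it is the only element, so $M(Y,T)=\{\nu\}$.

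Finally I would argue that in a uniquely ergodic system the support of the unique invariant measure is a minimal set. Indeed, $(Y,T)$ contains some minimal subset $M$ (every nonempty compact system does), and $M$ carries an invariant probability measure, which by unique ergodicity must be $\nu$; thus $\nu$ is supported on $M$, giving $Y=\supp(\nu)\subseteq M$. On the other hand $\supp(\nu)$ is a nonempty closed invariant subset of the minimal set $M$, so $M\subseteq \supp(\nu)=Y$. Therefore $Y=M$ is minimal, and every point of $\supp(\nu)$ is minimal. Alternatively, one may invoke the analysis inside the proof of Theorem~\ref{transitive}, which already produces a unique minimal set $M\subseteq Y$ with $\supp(Y,T)\subseteq M$; combined with $\supp(Y,T)=\supp(\nu)=Y$ from unique ergodicity, this again forces $Y=M$.

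I expect the only genuinely delicate point to be the passage from \emph{uniquely ergodic} to \emph{support is minimal}: it rests on the fact that every minimal set carries an invariant measure and on using minimality of $M$ to upgrade the inclusion $\supp(\nu)\subseteq M$ to an equality. The remaining steps---invariance of the support, inheritance of mean equicontinuity by the subsystem, and transitivity via generic points---are routine once they are arranged in this order.
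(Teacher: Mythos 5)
Your proof is correct and follows essentially the same route as the paper: transitivity of $(\supp(\nu),T)$ from ergodicity, unique ergodicity via Theorem~\ref{transitive}, and then minimality. The paper simply asserts ``uniquely ergodic, and so, it is minimal''; you usefully fill in the justification (unique ergodicity together with the fact that $\supp(\nu)$ is the whole subsystem forces it to coincide with its unique minimal set), but the argument is the same.
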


\begin{proof}
$(\text{supp}(\nu),T)$ is a transitive system since $\nu$ is an ergodic measure on $X$.
By Theorem \ref{transitive},
$(\text{supp}(\nu),T)$ is uniquely ergodic,
and so,
it is minimal.
\end{proof}

Now we are ready to show the main result. Note that our method is different from
the proof for the minimal case.

\begin{thm}\label{general}
$(X,T)$ is mean equicontinuous if and only if equicontinuous in the mean.
\end{thm}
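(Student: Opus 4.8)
The plan is to prove the substantial implication, that mean equicontinuity implies equicontinuity in the mean; the converse is immediate, since $\limsup_{n\to\infty}\bar d_n(x,y)\le\sup_{n}\bar d_n(x,y)$ forces the sup-condition to dominate the limsup-condition. I would argue by contradiction. If $(X,T)$ is not equicontinuous in the mean, there are $\ep_0>0$, points $x_k,y_k\in X$ with $d(x_k,y_k)\to 0$, and integers $n_k$ with $\bar d_{n_k}(x_k,y_k)\ge\ep_0$; a short continuity argument shows $n_k\to\infty$, since bounded $n_k$ would force $\bar d_{n_k}(x_k,y_k)\to 0$. Writing $z_k=(x_k,y_k)$ and passing to a subsequence, I would form the empirical measures $\mu_k=\frac1{n_k}\sum_{i=0}^{n_k-1}\delta_{(T\times T)^i z_k}$ on $X\times X$, with weak$^*$ limit $\mu_k\to\mu$, and also arrange that the orbit closures $O_k=\overline{Orb(z_k,T\times T)}$ converge in the Hausdorff metric to a closed invariant set $V$. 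Then $\mu$ is $T\times T$-invariant, $\supp\mu\subseteq V$, and, viewing the metric $d$ as an element of $C(X\times X)$, $\int d\,\mathrm{d}\mu=\lim_k\bar d_{n_k}(x_k,y_k)\ge\ep_0>0$, so $\mu$ is not carried by the diagonal $\Delta_X$.

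By Lemma~\ref{same} the product $(X\times X,T\times T)$ is again mean equicontinuous, so the structural results of this section apply to it. Two consequences drive the argument. First, by Theorem~\ref{transitive} each $O_k$ is uniquely ergodic, say with measure $\lambda_k$; applying Theorem~\ref{thm:unique-ergodic} to $d$ gives $\int d\,\mathrm{d}\lambda_k=\lim_{n}\bar d_n(z_k)\le\limsup_n\bar d_n(z_k)$, which by mean equicontinuity tends to $0$ as $k\to\infty$. Since $\lambda_k$ is supported on the unique minimal set $M_k\subseteq O_k$, this yields $\min_{M_k}d\le\int d\,\mathrm{d}\lambda_k\to 0$: the minimal set inside the orbit closure of the near-diagonal pair $z_k$ itself comes arbitrarily close to $\Delta_X$. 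Second, because $\int d\,\mathrm{d}\mu>0$, its ergodic decomposition contains (in the topological support of the decomposition) an ergodic measure $\nu$ with $\int d\,\mathrm{d}\nu>0$ and $\supp\nu\subseteq\supp\mu\subseteq V$. By Lemma~\ref{minimal-point} every point of $\supp\nu$ is minimal, and by Theorem~\ref{transitive} the transitive system $(\supp\nu,T\times T)$ is uniquely ergodic, hence minimal; set $W=\supp\nu$. As $W$ is minimal, $\Delta_X$ is closed and invariant, and $W\not\subseteq\Delta_X$, we get $W\cap\Delta_X=\emptyset$, so every $(u,v)\in W$ has $u\ne v$. Such a pair is \emph{distal}, hence not proximal: if $d(T^{n_j}u,T^{n_j}v)\to 0$, a limit $(w,w)$ of $(T^{n_j}u,T^{n_j}v)$ would lie in the minimal set $W=\overline{Orb((u,v),T\times T)}$ and force $W\subseteq\Delta_X$.

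The heart of the proof, and the step I expect to be the main obstacle, is to show that this distal $(u,v)$ is nevertheless \emph{regionally proximal}, i.e. $(u,v)\in Q(X,T)$; together with Lemma~\ref{lem:mean-eq-BP}, which gives $Q(X,T)=P(X,T)=BP(X,T)$, this makes $(u,v)$ proximal and contradicts the previous paragraph. Fix $\ep>0$ and take $k$ large. Since $(u,v)\in W\subseteq V=\lim_k O_k$, I can choose a time $t_k$ with $(T\times T)^{t_k}z_k$ within $\ep$ of $(u,v)$, so that $x'=T^{t_k}x_k$ and $y'=T^{t_k}y_k$ are $\ep$-close to $u$ and $v$. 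On the other hand $\omega(z_k,T\times T)$ contains the unique minimal set $M_k$, and by the first consequence above $M_k$ carries a point within $\ep$ of $\Delta_X$; hence the orbit of $z_k$ accumulates on such a point, giving arbitrarily large times, in particular some $s_k>t_k$, with $d(T^{s_k}x_k,T^{s_k}y_k)<\ep$. Setting $n=s_k-t_k\ge 1$ then yields $d(T^n x',T^n y')<\ep$ with $x',y'$ near $u,v$, which is exactly the definition of $(u,v)\in Q(X,T)$.

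The delicate point, which I would treat carefully, is precisely the conversion in the last paragraph: the measure $\mu$ only records that $(u,v)$ is a \emph{forward} limit of the near-diagonal pairs $z_k$, whereas membership in $Q(X,T)$ requires a perturbation of $(u,v)$ whose \emph{forward} orbit contracts. What bridges the gap is that unique ergodicity of $O_k$ pins the asymptotic behaviour of $z_k$ to the minimal set $M_k$, and mean equicontinuity forces $M_k$ back toward the diagonal; thus the orbit of $z_k$ is guaranteed to return near $\Delta_X$ at times \emph{past} $t_k$, which is what produces the contracting time $n=s_k-t_k$. Managing the three independent error budgets (closeness of $x',y'$ to $u,v$, proximity of $M_k$ to $\Delta_X$, and the accumulation of the orbit on $M_k$) so that all fall below $\ep$ simultaneously is the only real bookkeeping, and the conceptual core is the observation that the minimal set attached to a near-diagonal pair must itself be near-diagonal.
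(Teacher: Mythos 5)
Your argument is correct, and its first half coincides with the paper's: both negate equicontinuity in the mean, form the empirical measures $\mu_k=\frac{1}{n_k}\sum_{i=0}^{n_k-1}\delta_{(T^ix_k,T^iy_k)}$, pass to a weak$^*$ limit $\mu$ with $\int d\,\mathrm{d}\mu\ge\ep_0$, and use the ergodic decomposition together with Lemma~\ref{minimal-point} to produce a minimal point $(u,v)\in\supp(\mu)\setminus\Delta_X$ whose neighbourhoods are visited by the orbit of $z_k=(x_k,y_k)$. The endgames, however, are genuinely different. The paper takes a common limit $z$ of $x_k$ and $y_k$, sets $\delta=d\bigl(\overline{Orb((z,z))},\overline{Orb((u,v))}\bigr)>0$, and applies the mean-equicontinuity estimate twice (at $(x_k,y_k)$ near $(z,z)$ and at $(T^{m_k}x_k,T^{m_k}y_k)$ near $(u,v)$) to force the mean distance between the orbits of $(T^{m_k}z,T^{m_k}z)$ and $(u,v)$ below $\delta/2$, contradicting the fact that every single term is at least $\delta$ --- a purely metric triangle-inequality contradiction that never mentions proximality. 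You instead show that $(u,v)$ is simultaneously distal (its minimal orbit closure $W=\supp\nu$ misses $\Delta_X$) and regionally proximal, the latter because the orbit of $z_k$ passes near $(u,v)$ at some time $t_k$ and then, by unique ergodicity of $\overline{Orb(z_k,T\times T)}$ (Theorem~\ref{transitive}) together with the bound $\int d\,\mathrm{d}\lambda_k\to 0$ and the inclusion of the unique minimal set $M_k$ in $\omega(z_k,T\times T)$, must return near the diagonal at a later time $s_k>t_k$; Lemma~\ref{lem:mean-eq-BP} ($Q=P=BP$ for mean equicontinuous systems) then converts regional proximality into proximality and gives the contradiction. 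Your route leans more heavily on the structural results already established (unique ergodicity of orbit closures and $Q=P$), and it yields the pleasant intermediate fact that the minimal set attached to a near-diagonal pair is itself near-diagonal, whereas the paper's route is more elementary and self-contained once $(u,v)$ is in hand. Both are valid; the three-way error bookkeeping you flag as the delicate point does go through, since all three requirements hold simultaneously for all sufficiently large $k$.
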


\begin{proof}
If $(X,T)$ is equicontinuous in the mean,
it is clear that $(X,T)$ is mean equicontinuous.

Now assume that $(X,T)$ is mean equicontinuous.
If $(X,T)$ is not equicontinuous in the mean, then there are
$x_k,y_k,z\in X,n_k\in \Z,k=1,2,\cdots$ and $\ep_0>0$
such that $\lim_{k\rightarrow\infty }x_k=z=\lim_{k\rightarrow\infty }y_k$
and for every $k\in \Z$
$$
\frac{1}{n_k}\sum_{i=0}^{n_k-1}d(T^ix_k,T^iy_k)\geq \ep_0.
$$

Let $\mu_k=\frac{1}{n_k}\sum_{i=0}^{n_k-1}\delta_{(T^ix_k,T^iy_k)}$, then $\mu_k\in M(X\times X)$,
we may assume $\mu_k\rightarrow \mu$(otherwise we may consider the subsequence),
where $\mu\in M(X\times X,T\times T)$.

We claim that $\mu(\text{supp}(\mu )\setminus \Delta_X)>0$.
Actually, $d(\cdot,\cdot)$ is a continuous function on $X\times X$, then we have
$$
\int_{X\times X}d(x,y)\textrm{d}\mu_k\longrightarrow \int_{X\times X}d(x,y)\textrm{d}\mu
$$
and
$$
\int_{X\times X}d(x,y)\textrm{d}\mu_k=\frac{1}{n_k}\sum_{i=0}^{n_k-1}d(T^ix_k,T^iy_k)\geq \ep_0
$$
which implies
$$\mu(\text{supp}(\mu)\setminus \Delta_X)>0.$$
By the ergodic decomposition, we have $\nu(\text{supp}(\mu)\setminus \Delta_X)>0$
for some ergodic measure on $X\times X $. Thus, there exists a minimal point in $\text{supp}(\mu)\setminus \Delta_X$,
since $\text{supp}(\nu)$ is a minimal set by Lemma \ref{minimal-point}. Denote this minimal point by $(u,v)$.

For $l\in\N$, let $B_l=\{(x,y)\in X\times X:d((x,y),(u,v))<\frac{1}{l} \}$,
then
$$
\mu(B_l)>0  \ \text{and}\
\mu_k(B_l)=\frac{1}{n_k}\#(\{0\leq i \leq n_k:(T^ix_k,T^iy_k)\in B_l\}).
$$
There are infinitely many $k\in \Z$ with $0\leq m_k \leq n_k$ such that $(T^{m_k}x_k,T^{m_k}y_k)\in B_l$,
since $0<\mu(B_l)\leq \liminf_{k\rightarrow\infty}\mu_k(B_l)$ for every $l\in \N$.

Put $$\delta=d(\overline{Orb((z,z),T\times T)},\overline{Orb((u,v),T\times T)}).$$ Then, $\delta>0$, since $\overline{Orb((u,v),T\times T)}\cap \Delta_X=\emptyset$.
As $(X,T)$ is mean equicontinuous, so is $(X\times X,T\times T)$ by Lemma \ref{same}. Then,
for $\frac{1}{4}\delta$, there is $\eta>0$ such that if $d((x,y),(x',y'))<\eta$,
then \[\limsup_{n\to\infty}\frac{1}{n}\sum_{i=0}^{n-1}d((T\times T)^i(x,y),(T\times T)^i(x',y'))<\frac{\delta}{4}.\]

We can choose $k\in \mathbb{Z_{+}}$ with
 $d((x_k,y_k),(z,z))<\eta$ and $d((T^{m_k}x_k,T^{m_k}y_k),(u,v))<\eta$,
then
 \[\limsup_{n\to\infty}\frac{1}{n}\sum_{i=0}^{n-1}d((T\times T)^i(x_k,y_k),(T\times T)^i(z,z))<\frac{\delta}{4}\]
 and
  \[\limsup_{n\to\infty}\frac{1}{n}\sum_{i=0}^{n-1}d((T\times T)^i(T^{m_k}x_{k},T^{m_k}y_{k}),(T\times T)^i(u,v))<\frac{\delta}{4}\]
which implies
\[\limsup_{n\to\infty}\frac{1}{n}\sum_{i=0}^{n-1}d((T\times T)^i(T^{m_k}z,T^{m_k}z),(T\times T)^i(u,v))<\frac{\delta}{2}.\]
It is a contradiction, thus $(X,T)$ is equicontinuous in the mean.
\end{proof}

\subsection{Minimal sets in a transitive mean equicontinuous  system}
In Theorem \ref{transitive} we have shown that a transitive mean equicontinuous system is uniquely ergodic,
and thus it contains a unique minimal subset.
Here we will discuss what kinds of minimal sets can appear in a
transitive mean equicontinuous system.

\begin{thm} We have the following observations.
 \begin{enumerate}
 \item If $(X,T)$ is weakly mixing and mean equicontinuous, then the unique minimal set is a fixed point.
 \item If $(X,T)$ is totally transitive and mean equicontinuous, then the unique minimal set is totally minimal and mean equicontinuous.
 Moreover, any totally transitive, minimal, mean equicontinuous system can be realized in a totally transitive non-minimal
 mean equicontinuous system.
 \item If $(X,T)$ is transitive and mean equicontinuous, then the unique minimal set is mean equicontinuous.
 Moreover, any minimal, mean equicontinuous system can be realized in a transitive non-minimal
 mean equicontinuous system.
\end{enumerate}
\end{thm}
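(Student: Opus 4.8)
The plan is to dispatch the three ``first assertions'' with the structure theory already developed, and to handle the three ``moreover'' parts by explicit extension constructions.

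For (1), recall that weak mixing forces the maximal equicontinuous factor to be trivial, i.e.\ the equicontinuous structure relation is all of $X\times X$. By Lemma~\ref{lem:mean-eq-BP}, mean equicontinuity makes $Q(X,T)=P(X,T)=BP(X,T)$ a closed invariant equivalence relation; since by \cite{EG60} the structure relation is the smallest closed invariant equivalence relation containing $Q(X,T)$, it must coincide with $Q(X,T)$ itself. Hence $Q(X,T)=X\times X$, so $P(X,T)=X\times X$ and $(X,T)$ is proximal. Because weak mixing implies transitivity, Theorem~\ref{transitive} gives a unique minimal set $M$, and $(M,T)$ is minimal and proximal; by the Auslander--Ellis theorem a minimal proximal system is a single fixed point, which finishes (1).

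For the first assertions of (2) and (3), the minimal set $M$ is automatically mean equicontinuous, being a subsystem of a mean equicontinuous system. The substantive point in (2) is total minimality. First I would record that $(X,T)$ mean equicontinuous implies $(X,T^n)$ mean equicontinuous for every $n$: using Theorem~\ref{general} to pass to equicontinuity in the mean and splitting $[0,nm)$ into residue classes mod $n$,
\[
\frac1m\sum_{j=0}^{m-1}d(T^{nj}x,T^{nj}y)\ \le\ n\cdot\frac1{nm}\sum_{i=0}^{nm-1}d(T^ix,T^iy),
\]
so choosing the defining $\delta$ for the tolerance $\ep/n$ shows $(X,T^n)$ is equicontinuous in the mean. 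Then, for $(X,T)$ totally transitive, each $(X,T^n)$ is transitive and mean equicontinuous, hence uniquely ergodic with a unique $T^n$-minimal set $M_n$ by Theorem~\ref{transitive}. Picking a $T^n$-minimal $M'\subseteq M$, the images $M',TM',\dots,T^{n-1}M'$ are all $T^n$-minimal (each is a factor of $(M',T^n)$ under $T$), hence all equal $M_n$; their union is a nonempty closed $T$-invariant subset of the $T$-minimal set $M$, so it equals $M$, giving $M=M_n$. Thus $(M,T^n)$ is minimal for every $n$, i.e.\ $M$ is totally minimal.

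For the realization in (3), given a minimal mean equicontinuous $(M,S)$ I would pick $p\in M$ and work inside $M\times[0,1]$ with $T(m,s)=(Sm,s/2)$, setting $X=\overline{Orb((p,1),T)}=\{(S^np,2^{-n}):n\ge0\}\cup(M\times\{0\})$. Then $X$ is compact, $T$ is continuous, $(p,1)$ is a transitive point, and $M\times\{0\}$ is a proper subsystem conjugate to $(M,S)$, so the system is transitive and non-minimal with $M$ as its unique minimal set. Mean equicontinuity is verified by cases: for pairs inside $M\times\{0\}$ it is inherited from $(M,S)$, while for pairs involving the added points the height coordinate contributes a summable (hence mean-negligible) term $2^{-n}$, and the $M$-coordinates are close and thus controlled by the mean equicontinuity of $(M,S)$.

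The main obstacle is the totally transitive realization in (2). The construction just given is \emph{not} totally transitive: its added points are isolated, so the orbit of $(p,1)$ under $T^k$ meets only the residue class $0\bmod k$ and $\overline{Orb((p,1),T^k)}$ omits the remaining added points. To realize a totally transitive minimal mean equicontinuous $(M,S)$ one must instead arrange the added orbit to be a transitive point for \emph{every} power $T^k$; this forces the added set to accumulate on itself and not merely on $M$, so a discrete convergent orbit cannot be used, and the extension must be built so that the single orbit is dense at all scales within each residue class while the height perturbation stays mean-negligible. Establishing simultaneously that such an extension is mean equicontinuous and totally transitive, with $M$ still the unique minimal set, is the crux of the argument.
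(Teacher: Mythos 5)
Your treatment of the first assertions in (1)--(3) is sound, though it takes different routes from the paper. For (1) the paper argues measure-theoretically: weak mixing plus Theorem \ref{transitive} make $(X\times X,T\times T)$ uniquely ergodic, so $\mu\times\mu=\mu_\Delta$ and $\mu$ must be a point mass at a fixed point. You instead pass through $Q(X,T)=X\times X$ and Lemma \ref{lem:mean-eq-BP} to conclude the system is proximal; this works, although the fact that a minimal proximal system is a single fixed point is not literally the Auslander--Ellis theorem and deserves its own one-line argument (the orbit closure of $(x,Tx)$ contains a minimal set inside the diagonal, whence a fixed point). Your verification that $(X,T^n)$ inherits equicontinuity in the mean and your decomposition $M=M'\cup TM'\cup\dots\cup T^{n-1}M'$ are correct and in fact more explicit than the paper, which instead identifies $M$ with $\supp(\mu_n)$ for the unique $T^n$-invariant measure $\mu_n$. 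Your explicit realization for (3), the orbit $\{(S^np,2^{-n})\colon n\ge 0\}$ accumulating on $M\times\{0\}$, also checks out and is more elementary and self-contained than the paper's.

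The genuine gap is the ``moreover'' part of (2), which you explicitly leave unproved: you correctly diagnose that your spiral construction fails to be totally transitive, but you then only describe what a correct construction would have to achieve without producing one. The paper's resolution is a product trick that sidesteps the difficulty entirely: take $(X_2,T_2)$ weakly mixing and uniquely ergodic with its unique invariant measure $\delta_p$ supported on a fixed point $p$. Such a system is mean equicontinuous for free, since unique ergodicity gives $\frac1n\sum_{i=0}^{n-1}d(T_2^ix,T_2^iy)\le \frac1n\sum_{i=0}^{n-1}\bigl(d(T_2^ix,p)+d(p,T_2^iy)\bigr)\to 0$ for \emph{every} pair $(x,y)$. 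Then $(X_1\times X_2,T_1\times T_2)$, with $(X_1,T_1)$ totally minimal and mean equicontinuous, is mean equicontinuous by Lemma \ref{same}, is non-minimal with $X_1\times\{p\}$ as its unique minimal set, and is totally transitive because for each $n$ the return-time sets of the minimal system $(X_1,T_1^n)$ are syndetic while those of the still weakly mixing $(X_2,T_2^n)$ are thick. The same product, with $(X_1,T_1)$ merely minimal, is also how the paper realizes (3). To complete your proof of (2) you need either this product argument or a genuinely new totally transitive extension, which you have not supplied.
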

\begin{proof}
(1). If $(X,T)$ is mean equicontinuous and weakly mixing,
 then $(X\times X,T\times T)$ is transitive, thus it is uniquely ergodic by Theorem \ref{transitive}.
 As we know $\mu\times \mu$ and $\mu_\Delta$ are invariant measure on $X\times X$
 for any invariant measure $\mu$ on $X$,
 thus $\mu\times \mu=\mu_\Delta$.
 $\mu_\Delta(\Delta_X)=1$ implies $\mu$ must be $\delta_x$ for some $x\in X$.
 Hence $(X,T)$ is also uniquely ergodic and the unique fixed point is $x$.

 \medskip
 (2). If $(X,T)$ is mean equicontinuous and totally transitive,
 it has only one minimal set by Theorem \ref{transitive}, denoted by $M$.
 Then $(M,T)$ is totally minimal.
 Actually, $(X,T^n)$ is also a transitive mean equicontinuous system for every $n\in\mathbb{N}$,
 again by Theorem \ref{transitive}, there is only one $T^n$-invariant measure
 on $X$ denoted by $\mu_n$.
Let $\mu$ be the unique invariant measure on $(X,T)$ and it is also invariant on $T^n$,
hence $\mu=\mu_n$ and $M=\text{supp}(\mu)=\text{supp}(\mu_n)$,
which implies $(M,T^n)$ is minimal. It is clear that $(X,T)$ is mean equicontinuous.

Now let $(X_1,T_1)$ be a totally minimal mean equicontinuous system and $(X_2,T_2)$ be a weakly mixing
system such that the uniquely ergodic measure is supported on a fixed point $p$. Then, $(X_2,T_2)$
is mean equicontinuous and $(X_1\times X_2, T_1\times T_2)$ is the system we want.

 \medskip
(3). The first statement follows again by Theorem \ref{transitive}. Let $(X_1,T_1)$ be a minimal mean equicontinuous system
and $(X_2,T_2)$ be a weakly mixing
system such that the uniquely ergodic measure is supported on a fixed point $p$. Then, $(X_2,T_2)$
is mean equicontinuous and $(X_1\times X_2, T_1\times T_2)$ is the system we want.
\end{proof}

\section{Regionally proximal and Banach proximal relations}
Lemma \ref{lem:mean-eq-BP} shows that for a mean equicontinuous system $(X,T)$,
we have $BP(X,T)=P(X,T)=Q(X,T)$.
We will show the converse is also valid, i.e.
for a dynamical system $(X,T)$, if $BP(X,T)=P(X,T)=Q(X,T)$
then it is equicontinuous in the mean. In fact we will prove more by providing a series of equivalent statements, see Theorem \ref{thm:transitive-mean-eq} for details.

We start with some preparations.
The following lemma is just a simple observation.

\begin{lem}
Let $(X,T)$ be a dynamical system, if $(x,y)\in BP(X,T)$, then for any neighborhood $U$ of $\Delta_{X}$,
we have $BD(N((x,y),U))=1$.

\end{lem}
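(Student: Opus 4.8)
The plan is to unwind the two definitions and observe that they say the same thing. Recall that $(x,y)\in BP(X,T)$ means that for every $\ep>0$, the set $\{n\in\Z\colon d(T^nx,T^ny)\geq \ep\}$ has zero Banach density, or equivalently that $\{n\in\Z\colon d(T^nx,T^ny)<\ep\}$ has Banach density one. On the other side, for a neighborhood $U$ of the diagonal $\Delta_X$, we have $N((x,y),U)=\{n\in\Z\colon (T^nx,T^ny)\in U\}$, which is the set of times the orbit of the pair lands in $U$. So the content of the lemma is that landing in an arbitrary diagonal neighborhood happens with Banach density one, and the link between the two formulations is simply that the $\ep$-balls $\{(a,b)\colon d(a,b)<\ep\}$ form a neighborhood base of $\Delta_X$ in $X\times X$.

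First I would fix an arbitrary neighborhood $U$ of $\Delta_X$. Since $X\times X$ is a compact metric space and $\Delta_X$ is compact, there exists $\ep>0$ such that the $\ep$-thickening $\{(a,b)\in X\times X\colon d(a,b)<\ep\}$ is contained in $U$; this is the only topological input needed, and it follows from the standard fact that on a compact set one can find a uniform radius inside any open neighborhood. Granting this, whenever $d(T^nx,T^ny)<\ep$ we have $(T^nx,T^ny)\in U$, so that
\[
\{n\in\Z\colon d(T^nx,T^ny)<\ep\}\subseteq N((x,y),U).
\]

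Next I would apply the hypothesis $(x,y)\in BP(X,T)$ to this particular $\ep$: by definition the left-hand set has Banach density one, i.e. its complement has zero Banach density. Since a superset of a set of Banach density one again has a complement of zero Banach density, monotonicity of $BD^*$ forces $N((x,y),U)$ to have Banach density one as well, which gives $BD(N((x,y),U))=1$. I do not anticipate any genuine obstacle here — the statement is, as the authors say, a simple observation. The only point requiring the slightest care is making the passage from a general neighborhood $U$ to a metric $\ep$-ball explicit, and confirming that the relevant monotonicity and complementation properties of upper Banach density behave as expected (a set containing a set of full Banach density has full Banach density); both are routine once the inclusion above is in hand.
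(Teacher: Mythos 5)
Your proof is correct and is exactly the routine unwinding of definitions that the authors intend when they call this lemma ``a simple observation'' and omit its proof: compactness of $\Delta_X$ yields an $\ep$-thickening $\{(a,b)\colon d(a,b)<\ep\}\subseteq U$, and then the Banach proximality of $(x,y)$ together with monotonicity of Banach density gives $BD(N((x,y),U))=1$. Nothing is missing.
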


\begin{lem}\label{measure}
Let $(X,T)$ be a dynamical system.
If there exists $\mu\in M(X\times X, T\times T)$ such that $\mu(BP(X,T))=1$,
then $\mu(\Delta_X)=1$.
\end{lem}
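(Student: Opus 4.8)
The plan is to test the measure $\mu$ against the continuous function $g(x,y)=d(x,y)$ on $X\times X$, which is nonnegative and vanishes exactly on $\Delta_X$. Thus it suffices to prove $\int_{X\times X} g\,\mathrm{d}\mu=0$: once this is known, $g=0$ for $\mu$-almost every pair, and since $\{g=0\}=\Delta_X$ this forces $\mu(\Delta_X)=1$. The virtue of this route is that it never uses ergodicity of $\mu$, only its invariance together with a uniform bound.

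First I would exploit the $T\times T$-invariance of $\mu$. For every $i$ we have $\int g\circ(T\times T)^i\,\mathrm{d}\mu=\int g\,\mathrm{d}\mu$, so averaging over $i=0,\dots,n-1$ and writing $A_n(x,y)=\frac{1}{n}\sum_{i=0}^{n-1} d(T^ix,T^iy)$ gives
\[
\int_{X\times X} g\,\mathrm{d}\mu=\int_{X\times X} A_n\,\mathrm{d}\mu
\]
for every $n\in\N$. The functions $A_n$ are continuous and uniformly bounded by $\diam(X)$.

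The key step is to show that $A_n(x,y)\to 0$ for every Banach proximal pair $(x,y)$. Fix $\ep>0$ and set $F_\ep=\{i\in\Z\colon d(T^ix,T^iy)\geq\ep\}$. By the definition of Banach proximality $F_\ep$ has zero Banach density, and since $\overline{D}(F_\ep)\leq BD^*(F_\ep)=0$ it also has zero upper density. Splitting the average over $F_\ep\cap[0,n-1]$ and its complement, and bounding the first part by $\diam(X)$ and each remaining term by $\ep$, we obtain
\[
A_n(x,y)\leq \frac{\#(F_\ep\cap[0,n-1])}{n}\,\diam(X)+\ep,
\]
so $\limsup_{n\to\infty}A_n(x,y)\leq\ep$; letting $\ep\to0$ yields $A_n(x,y)\to0$. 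This is precisely the quantitative content behind the preceding observation that Banach proximal pairs spend Banach density one near the diagonal.

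Finally, since $\mu(BP(X,T))=1$, the pointwise convergence $A_n\to0$ holds $\mu$-almost everywhere, and the $A_n$ are dominated by the constant $\diam(X)$. The dominated convergence theorem then gives $\int A_n\,\mathrm{d}\mu\to0$, whence $\int g\,\mathrm{d}\mu=0$ and therefore $\mu(\Delta_X)=1$. I expect the only delicate point to be the core estimate $A_n\to0$ on Banach proximal pairs; everything hinges on the passage from zero Banach density to zero upper density, after which the arguments via invariance and dominated convergence are routine.
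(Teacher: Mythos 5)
Your proof is correct, and it takes a genuinely different route from the paper's. The paper argues by contradiction: assuming $\mu(\Delta_X)<1$, it extracts a closed set $F\subset BP(X,T)\setminus\Delta_X$ of positive measure, invokes the ergodic decomposition and the Birkhoff ergodic theorem to produce a point $z\in F$ whose return times to $F$ (hence to a neighborhood $U$ of $F$) have positive lower density, and then contradicts the fact that $z$, being Banach proximal, visits any neighborhood $V$ of $\Delta_X$ disjoint from $U$ with Banach density one. Your argument instead computes $\int d\,\mathrm{d}\mu$ directly: the invariance identity $\int d\,\mathrm{d}\mu=\int A_n\,\mathrm{d}\mu$, the elementary pointwise estimate $A_n(x,y)\le \frac{\#(F_\ep\cap[0,n-1])}{n}\diam(X)+\ep\to\ep$ on Banach proximal pairs (using $\overline{D}(F_\ep)\le BD^*(F_\ep)=0$), and dominated convergence. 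All steps check out; in particular the passage from zero upper Banach density to zero upper density is immediate from the definitions in Section 2. What your approach buys is economy: it needs neither the ergodic decomposition nor the Birkhoff theorem, only $T\times T$-invariance of $\mu$ and the bounded convergence theorem, and it reaches the conclusion directly rather than by contradiction. The paper's approach, on the other hand, is closer in spirit to the support-and-return-times machinery (Lemma 4.4 / \cite{LT14}) that the rest of Section 4 relies on. Either proof is acceptable.
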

\begin{proof}
Assume that $\mu(\Delta_{X})<1$, i.e. $\mu(BP(X,T)\setminus \Delta_{X})>0$,
As $X$ is a compact metric space there exists a closed set $F\subset \text{supp}(\mu)\cap BP(X,T)\setminus \Delta_{X}$
with $\mu(F)>0$.
By the ergodic decomposition theorem, there exists an ergodic measure $\nu$ with $\nu(F)>0$.
By Birkhoff ergodic theorem there exists $z\in F$ such that
$$ \frac{1}{n} \#(N(z,F)\cap[0,n-1])=\frac{1}{n}\sum_{i=0}^{n-1} \chi_{F}(T^{i}z)\rightarrow \int \chi_F \textrm{d}\nu=\nu(F)>0,$$
then we have $D(N(z,F))>0$.
We choose neighborhoods $U$ and $V$ of $F$ and $\Delta_X$ respectively with $U\cap V=\emptyset$,
then $\underline{BD}(N(z,U))\geq D(N(z,F))>0$.
On the other hand, we have $BD(N(z,V))=1$, since $z\in BP(X,T)$.
The contradiction shows the lemma.
\end{proof}



For a minimal system the following result was known, see \cite{DG16} and \cite{YL}. We now show
it holds for a general system.

\begin{thm}\label{thm:transitive-mean-eq}
Let $(X,T)$ be a dynamical system.
Let $(Z,S)$ be the maximal equicontinuous factor of $(X,T)$
and  $\pi\colon (X,T)\to (Z,S)$ be the factor map.
Then the following conditions are equivalent:
\begin{enumerate}
	\item $(X,T)$ is mean equicontinuous;
	\item  $\pi$ is Banach proximal;
\item $BP(X,T)=P(X,T)=Q(X,T)$;
	\item $\pi\colon (X,\mu,T) \to (Z,\nu,S)$ is measure-theoretically isomorphic, where $\mu$ and $\nu$ are
	any invariant measures on $X$ and $Z$  respectively with $\pi(\mu)=\nu$;
	\item $(X,T)$ is equicontinuous in the mean.
\end{enumerate}
\end{thm}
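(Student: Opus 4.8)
The plan is to establish the five conditions as a single cycle $(1)\Rightarrow(3)\Rightarrow(2)\Rightarrow(4)\Rightarrow(5)\Rightarrow(1)$, since two of the links come for free: $(1)\Rightarrow(3)$ is exactly Lemma~\ref{lem:mean-eq-BP}, and $(5)\Rightarrow(1)$ is the trivial direction already noted at the start of the proof of Theorem~\ref{general}, so $(1)$ and $(5)$ may be treated as a single node. For $(3)\Rightarrow(2)$ I would first record that $BP(X,T)$ is always reflexive, symmetric and invariant, and is moreover transitive: the upper Banach density is subadditive, so a union of two sets of zero Banach density again has zero Banach density, whence $(x,y),(y,z)\in BP(X,T)$ forces $(x,z)\in BP(X,T)$. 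Thus $BP(X,T)$ is always an equivalence relation. Since $Q(X,T)$ is always closed and invariant, the hypothesis $BP=P=Q$ makes the common relation $R:=Q(X,T)$ a closed invariant equivalence relation. By the characterization of the maximal equicontinuous factor from \cite{EG60} quoted in Section~2, $R_\pi$ is the smallest closed invariant equivalence relation containing $Q(X,T)$; as $R$ is such a relation and contains $Q$, minimality gives $R_\pi=R=BP(X,T)$, i.e. $\pi(x)=\pi(y)$ implies $(x,y)\in BP(X,T)$, which is precisely that $\pi$ is Banach proximal.

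For $(2)\Rightarrow(4)$ the tool is the relatively independent self-joining over $Z$. Given invariant $\mu$ with $\pi\mu=\nu$, disintegrate $\mu=\int_Z\mu_z\,d\nu(z)$ over $\pi$ and form $\lambda=\int_Z\mu_z\times\mu_z\,d\nu(z)\in M(X\times X,T\times T)$, which is supported on $R_\pi$. Since $\pi$ is Banach proximal we have $R_\pi\subseteq BP(X,T)$, so $\lambda$ is carried by $BP(X,T)$, and Lemma~\ref{measure} yields $\lambda(\Delta_X)=1$. Because $\mu_z\times\mu_z(\Delta_X)=1$ exactly when $\mu_z$ is a Dirac mass, the fibre measures $\mu_z$ are $\nu$-a.e. point masses, which is the assertion that $\pi$ is a measure-theoretic isomorphism.

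The hard part will be $(4)\Rightarrow(5)$, and I would argue by contradiction following the scheme of Theorem~\ref{general}. If $(X,T)$ is not equicontinuous in the mean, produce $x_k,y_k\to z$, integers $n_k$ and $\ep_0>0$ with $\frac1{n_k}\sum_{i=0}^{n_k-1}d(T^ix_k,T^iy_k)\ge\ep_0$, and pass to a weak$^*$ limit $\mu$ of the empirical measures $\mu_k=\frac1{n_k}\sum_{i=0}^{n_k-1}\delta_{(T^ix_k,T^iy_k)}$, so that $\int d\,d\mu\ge\ep_0$ and hence $\mu(\Delta_X)<1$. The first new ingredient is that $\mu$ is actually supported on $R_\pi$: since $(Z,S)$ is equicontinuous and $\pi(x_k),\pi(y_k)\to\pi(z)$, the orbits $S^i\pi(x_k)$ and $S^i\pi(y_k)$ stay uniformly close for large $k$, so each $\mu_k$ is supported on an arbitrarily small neighbourhood of $R_\pi$ and $\mu(R_\pi)=1$ in the limit. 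The second ingredient is a disintegration argument: let $\mu_1,\mu_2$ be the two marginals of $\mu$; since $\mu$ lives on $R_\pi$ we have $\pi\mu_1=\pi\mu_2=:\nu$. Applying $(4)$ to $\mu_1$ and to $\mu_2$ gives Dirac disintegrations $\mu_1=\int\delta_{\phi_1(z)}\,d\nu(z)$ and $\mu_2=\int\delta_{\phi_2(z)}\,d\nu(z)$ for measurable sections $\phi_1,\phi_2$ of $\pi$, while applying $(4)$ to $\tfrac12(\mu_1+\mu_2)$ forces $\tfrac12(\delta_{\phi_1(z)}+\delta_{\phi_2(z)})$ to be a Dirac mass for $\nu$-a.e. $z$, whence $\phi_1=\phi_2$ a.e. As $\mu$ is a joining of $\mu_1,\mu_2$ over $\Delta_Z$, its fibre measures have marginals $\delta_{\phi_1(z)}$ and $\delta_{\phi_2(z)}$, hence equal $\delta_{(\phi_1(z),\phi_2(z))}$, so $\mu=\int\delta_{(\phi_1(z),\phi_1(z))}\,d\nu(z)$ and $\mu(\Delta_X)=1$, contradicting $\mu(\Delta_X)<1$.

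I expect the genuinely delicate points to be concentrated in this last step: confirming that the limit measure is concentrated on $R_\pi$ (which rests on the equicontinuity of the maximal equicontinuous factor, not on any averaging), and handling the disintegrations and the joining decomposition carefully so that the three applications of $(4)$ combine to pin $\mu$ onto the diagonal. A minor measurability issue — that $BP(X,T)$ is Borel, so that Lemma~\ref{measure} applies — can be sidestepped exactly as in the proof of that lemma, by working with closed subsets of $\supp(\lambda)\subseteq R_\pi$, which automatically lie in $BP(X,T)$.
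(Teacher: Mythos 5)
Your proposal is correct. The first four links essentially coincide with the paper's: $(1)\Rightarrow(3)$ is Lemma~\ref{lem:mean-eq-BP}; your $(3)\Rightarrow(2)$ (via the observation that $BP(X,T)$ is always an equivalence relation, so that under $(3)$ the relation $Q(X,T)$ is a closed invariant equivalence relation and hence equals $R_\pi$ by the Ellis--Gottschalk characterization) is exactly what makes the paper's ``$(2)\Leftrightarrow(3)$ is clear'' work; and your $(2)\Rightarrow(4)$ is the same relatively independent self-joining $\int_Z\mu_z\times\mu_z\,\mathrm{d}\nu$ plus Lemma~\ref{measure} that the paper uses for $(3)\Rightarrow(4)$. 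The genuine divergence is in $(4)\Rightarrow(5)$. After extracting the limit $\mu$ of the empirical measures, the paper pushes everything to $Z$, shows that the limit $\nu$ on $Z\times Z$ is the diagonal measure over the uniquely ergodic orbit closure $\overline{Orb(z,S)}$ by testing against functions $f\otimes g$, chooses disjoint open sets $U,V$ with $\mu(U\times V)>0$, and reaches the contradiction $0<\nu'(\pi(U)\cap\pi(V))=\mu'(U\cap V)=0$ from a single application of $(4)$ to the first marginal. You instead prove directly that $\mu(R_\pi)=1$ (using uniform equicontinuity of $(Z,S)$ rather than unique ergodicity of $\overline{Orb(z,S)}$) and then apply $(4)$ three times --- to the marginals $\mu_1,\mu_2$ and to $\tfrac12(\mu_1+\mu_2)$ --- to force the two measurable sections to coincide a.e.\ and hence pin the fibre measures of $\mu$, whose marginals are Dirac, onto the diagonal. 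Both routes are sound; yours replaces the explicit $f\otimes g$ computation and the sets $U,V$ by a cleaner disintegration-of-joinings argument, at the price of invoking $(4)$ for a convex combination of invariant measures (which the statement permits, being quantified over all invariant $\mu$ with $\pi(\mu)=\nu$). Your remark on the measurability of $BP(X,T)$ in Lemma~\ref{measure}, handled through closed subsets of the closed set $R_\pi\subset BP(X,T)$, is also adequate and consistent with how that lemma is actually applied in the paper.
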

\begin{proof}
	(1) $\Rightarrow$ (2) by Lemma \ref{lem:mean-eq-BP}. (1) $\Leftrightarrow$ (5) by Theorem \ref{general}. Moreover, it is clear
that (2) $\Leftrightarrow$ (3).
	
	(3) $\Rightarrow$ (4)
This is essentially proved in \cite[Theorem 3.8]{YL}.
Here we provide a proof for completeness.
	
Let $\mu$ be an invariant measure on $(X,T)$ and $\nu$ be the invariant measure on $(Z,S)$ with $\pi(\mu)=\nu$.
We consider the disintegration of $\mu$ over $\nu$.
That is, for a.e. $y\in Z$ we have a measure $\mu_y$ on $X$ such that $\supp(\mu_y)\subset \pi^{-1}(y)$
and $\mu=\int_{y\in Z}\mu_y \textrm{d}\nu$.
Let $W=\{(u,v)\in X\times X: \pi(u)=\pi(v)\}$.
As $\supp(\mu_y)\subset \pi^{-1}(y)$,
we have $\supp(\mu_y\times\mu_y)\subset \pi^{-1}(y)\times \pi^{-1}(y)\subset W$, a.e. $y\in Y$.
Let $\mu\times_Z\mu=\int_{y\in Z}\mu_y\times\mu_y\textrm{d}\nu$.
Then $\mu\times_Z\mu$ is an invariant measure on $(X\times X,T\times T)$.
Moreover,
\[\mu\times_Z\mu(W)=\int_{y\in Z} \mu_y\times\mu_y(W)\textrm{d}\nu=1,\]
then $\supp(\mu\times_Z\mu)\subset W$.
By Lemma \ref{measure}
we have $\mu\times_Z\mu(\Delta_X)=1$.
Since
\[\mu\times_Z\mu(\Delta_X)=\int_{y\in Y}\mu_y\times \mu_y(\Delta_X)\textrm{d}\nu=1,\]
we have $\mu_y\times\mu_y(\Delta_X)=1$ a.e. $y\in Z$.
Then for a.e. $y\in Z$,
there exists a point $c_y\in \pi^{-1}(y)$ such that $\mu_y=\delta_{c_y}$.
	
Let $Y_0$ be the collection of $y\in Z$ such that $\mu_y$ is not equal to $\delta_x$ for
any $x\in X$. Then $\nu(\cup_{i\in \Z}S^{-i}Y_0)=0$. Let $Z_0=Z\setminus \cup_{i\in \Z}S^{-i}Y_0$
and $X_0=\{c_y:y\in Z_0\}$.
Then $\nu(Z_0)=1$. Now we show that $X_0$ is a measurable set. In fact,
the map $y\mapsto\mu_y$ from $Z_0$ to $M(X)$ is measurable and
$x\mapsto \delta_x$ is an embedding. Since $Z_0$ is a measurable set and
maps are 1-1, it follows from Souslin's theorem that $X_0$ is a
measurable set, and it is clear that $\mu(X_0)=\mu(\pi^{-1}Z_0)=\nu(Z_0)=1$. By the same
argument, $\pi: (X_0,\mu,T)\to (Z_0,\nu,S)$ is a
measure-theoretic isomorphism.
	
(4) $\Rightarrow$ (5)
If $(X,T)$ is not equicontinuous in the mean, then there are
$x_k,x\in X,n_k\in \Z,k=1,2,\cdots$ and $\ep_0>0$
such that $\lim_{k\rightarrow\infty }x_k=x$
and for every $k\in \Z$
$$
\frac{1}{n_k}\sum_{i=0}^{n_k-1}d(T^ix_k,T^ix)\geq \ep_0.
$$

Let $\pi(x_k)=z_k$ and $\pi(x)=z$. We define
$$\mu_k=\frac{1}{n_k}\sum_{i=0}^{n_k-1}\delta_{(T^ix_k,T^ix)}$$
and
$$\nu_k=\frac{1}{n_k}\sum_{i=0}^{n_k-1}\delta_{(S^iz_k,S^iz)}$$
then
$$(\pi \times \pi)(\mu_k)=\nu_k.$$
By taking the subsequence,
there exists $\mu$ and $\nu$ on $M(X\times X,T\times T)$ and $M(Z\times Z,S\times S)$
respectively with $\lim_{k\rightarrow\infty}\mu_k=\mu,\lim_{k\rightarrow\infty}\nu_k=\nu$
and $(\pi \times \pi)(\mu)=\nu$.

We claim that $\mu(\text{supp}(\mu )\setminus \Delta_X)>0$.
Actually, $d(\cdot,\cdot)$ is a continuous function on $X\times X$, then we have
$$
\int_{X\times X}d(x,y)\textrm{d}\mu_k\longrightarrow \int_{X\times X}d(x,y)\textrm{d}\mu
$$
and $\int_{X\times X}d(x,y)\textrm{d}\mu_k=\frac{1}{n_k}\sum_{i=0}^{n_k-1}d(T^ix_k,T^ix)\geq \ep_0$
which implies $\mu(\text{supp}(\mu)\setminus \Delta_X)>0$.
There are open sets $U$ and $V$ of $X$ with $U\cap V=\emptyset$ and $\mu(U\times V)>0$.

Let $\mu'$ and $\nu'$ be the projection of $\mu$ and $\nu$ onto the
first component of $X$ and $Z$ respectively. It is clear that $\mu'\in M(X,T)$ and $\nu'\in M(Z,S)$.
It is easy to see $\text{supp}(\nu)\subset \overline{Orb((z,z),S\times S)}$.
Then $\nu'(\overline{Orb(z,S)})=\nu(\overline{Orb(z,S)}\times Z)=1$,
which implies $\text{supp}(\nu')\subset \overline{Orb(z,S)}$.
Furthermore,
$\nu'$ is the only invariant measure on $\overline{Orb(z,S)}$,
since $\overline{Orb(z,S)}$ is uniquely ergodic.
As for every $f,g\in C(Z)$, we have
$$\int_{Z\times Z}f(z_1)g(z_2)\textrm{d}\nu_k(z_1,z_2)\longrightarrow \int_{Z\times Z}f(z_1)g(z_2)\textrm{d}\nu(z_1,z_2)$$
and
$$\int_{Z\times Z}f(z_1)g(z_2)\textrm{d}\nu_k(z_1,z_2)=\frac{1}{n_k}\sum_{i=0}^{n_k-1}f(S^iz_{k})g(S^iz)\longrightarrow
\int_Z f(z)g(z)\textrm{d}\nu'(z),$$
thus $\nu(A\times B)=\nu'(A\cap B)$.
So $\nu'(\pi(U)\cap \pi(V))=\nu(\pi(U)\times \pi(V))\geq \mu(U\times V)>0$.

Obviously, $\pi(\mu')$ is an invariant measure on $\overline{Orb(z,S)}$,
thus we have $\pi(\mu')=\nu'$. As
$\pi\colon (X,\mu',T) \to (Z,\nu',S)$ is measure-theoretic isomorphic, we have $\nu'(\pi(U)\cap \pi(V))=\mu'(U\cap V)=0$,
it is a contradiction.
This shows $(X,T)$ is mean equicontinuous.
\end{proof}





\section{Mean equicontinuity and Weyl mean equicontinuity}

Following \cite{DG16} and \cite{YL},
a dynamical system $(X,T)$ is called \emph{Banach mean equicontinuous or Weyl mean equicontinuous}
if for every $\ep>0$, there exists a $\delta>0$ such that
\[
\limsup_{n-m\to\infty}\frac{1}{n-m}\sum_{i=m}^{n-1}d(T^ix,T^iy)<\ep
\]
for all $x,y\in X$ with $d(x,y)<\delta$.
It is clear that  each Weyl mean equicontinuous system is mean equicontinuous.
It is shown in~\cite{DG16} that if a minimal system is
mean equicontinuous then it is Weyl mean equicontinuous.
In this section we show that for a general dynamical system mean equicontinuity is equivalent to
Weyl mean equicontinuity. That is, we have

\begin{thm}\label{thm:Weyl-mean-equ}
A dynamical system $(X,T)$ is mean equicontinuous if and only if
it is Weyl mean equicontinuous.
\end{thm}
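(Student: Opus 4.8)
The plan is to prove only the nontrivial implication, that mean equicontinuity implies Weyl mean equicontinuity, since the reverse implication is already observed in the text. The central idea is to extract from equicontinuity in the mean a single continuous ``limit distance function'' and then to exploit that the convergence producing it is \emph{uniform}; this uniformity is exactly what allows one to pass from the fixed windows $[0,n-1]$ to arbitrary windows $[m,n-1]$ with $n-m\to\infty$.

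First I would invoke Theorem \ref{general} to replace mean equicontinuity by equicontinuity in the mean. Applying Theorem \ref{thm:unifon-conv-equi}, condition (1) $\Rightarrow$ (3), to the continuous function $d\in C(X\times X)$, the Cesàro averages $\frac{1}{n}\sum_{i=0}^{n-1}d(T^ix,T^iy)$ converge \emph{uniformly} on $X\times X$ to a $T\times T$-invariant continuous function $d^{*}\in C(X\times X)$. Two properties of $d^{*}$ will be used: it is invariant, so $d^{*}(T^mx,T^my)=d^{*}(x,y)$ for all $m$, and it vanishes on the diagonal, since $d^{*}(x,x)=\lim_n\frac{1}{n}\sum_{i=0}^{n-1}d(T^ix,T^ix)=0$.

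Next I would reduce an arbitrary Weyl window to a based Cesàro average. For fixed $x,y$ and $0\le m<n$, reindexing by $i=m+j$ gives
\[
\frac{1}{n-m}\sum_{i=m}^{n-1}d(T^ix,T^iy)=\frac{1}{n-m}\sum_{j=0}^{n-m-1}d\bigl(T^j(T^mx),T^j(T^my)\bigr),
\]
which is precisely the length-$(n-m)$ Cesàro average of $d$ based at the pair $(T^mx,T^my)$. By the uniform convergence to $d^{*}$, for each $\ep>0$ there is an $N$, \emph{independent of the base point}, such that every such average of length at least $N$ is within $\ep$ of $d^{*}(T^mx,T^my)=d^{*}(x,y)$. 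Hence the Weyl window averages converge (not merely $\limsup$) to $d^{*}(x,y)$ as $n-m\to\infty$, uniformly in the choice of starting index $m$.

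Finally, since $d^{*}$ is continuous on the compact space $X\times X$ and vanishes on $\Delta_X$, a standard uniform-continuity and compactness argument yields, for each $\ep>0$, a $\delta>0$ such that $d(x,y)<\delta$ forces $d^{*}(x,y)<\ep$; combined with the previous paragraph this is exactly Weyl mean equicontinuity. The only delicate point, and the step I expect to carry the weight of the argument, is the uniformity of $N$ in the base point $(T^mx,T^my)$: it is this uniformity, supplied by Theorem \ref{thm:unifon-conv-equi}, that upgrades control of the ordinary Cesàro averages to simultaneous control of all Weyl windows.
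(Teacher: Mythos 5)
Your proof is correct, but it follows a genuinely different route from the paper's. The paper proves the nontrivial direction pointwise: it first establishes a lemma saying that in a uniquely ergodic system the Weyl averages $\frac{1}{n-m}\sum_{i=m}^{n-1}f(T^ix)$ and the Ces\`aro averages of a continuous $f$ have the same limit along every orbit, and then applies this to $d(\cdot,\cdot)$ on $\overline{Orb((x,y),T\times T)}$, which is uniquely ergodic by Theorem \ref{transitive}; the two limsups coincide pair by pair, so the $\delta$ from mean equicontinuity transfers verbatim. You instead route the argument through Theorem \ref{general} and Theorem \ref{thm:unifon-conv-equi}(3): the uniform convergence of $\frac{1}{n}\sum_{i=0}^{n-1}d\circ(T^i\times T^i)$ to a continuous invariant $d^{*}$, applied at the base point $(T^mx,T^my)$ after reindexing, controls all windows simultaneously, and the final $\delta$ comes from compactness of $\{d^{*}\ge\ep\}$ and its disjointness from $\Delta_X$. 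Each step checks out; in particular the key uniformity-in-the-base-point observation is exactly what Theorem \ref{thm:unifon-conv-equi}(3) supplies, and $d^{*}$ does vanish on the diagonal and is $T\times T$-invariant. The trade-off: the paper's argument needs only Theorem \ref{transitive} plus a short unique-ergodicity lemma, whereas yours leans on the heavier Theorem \ref{general}; in exchange your argument yields more, namely that the window averages converge (not merely have a limsup) to $d^{*}(x,y)$ with a rate uniform in both the pair and the starting index $m$ --- which is essentially the stronger uniform statement the paper proves separately, by a longer contradiction argument, as the final theorem of Section 5.
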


Before proving the Theorem, we need the following lemma.

\begin{lem}
If a dynamical system $(X,T)$ is uniquely ergodic, then
for any $f\in C(X, \mathbb{R})$ and $x\in X$,
\[\lim_{n-m\to\infty}\frac{1}{n-m}\sum_{i=m}^{n-1}
f(T^ix)=\lim_{n\to\infty}\frac{1}{n}\sum_{i=0}^{n-1}
f(T^ix).\]
\end{lem}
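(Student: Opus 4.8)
The plan is to reduce the segment average appearing on the left-hand side to an ordinary Ces\`aro average started from a shifted point, and then to feed this into the characterization of unique ergodicity recorded in Theorem~\ref{thm:unique-ergodic}. Writing $f_N(y)=\frac{1}{N}\sum_{j=0}^{N-1}f(T^jy)$ as before, the first step is the elementary reindexing identity
\[
\frac{1}{n-m}\sum_{i=m}^{n-1} f(T^i x) = \frac{1}{n-m}\sum_{j=0}^{n-m-1} f\bigl(T^j(T^m x)\bigr) = f_{n-m}(T^m x),
\]
obtained by substituting $j=i-m$. Thus the left-hand average is simply $f_N(T^m x)$ with $N=n-m$: a Ces\`aro average of length $N$ evaluated at the point $T^m x\in X$.

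Next I would invoke Theorem~\ref{thm:unique-ergodic}: since $(X,T)$ is uniquely ergodic, its condition (2) tells us that $\{f_N\}_{N=1}^\infty$ converges uniformly on $X$ to some constant $c$ (necessarily $c=\int_X f\,\mathrm{d}\mu$ for the unique invariant measure $\mu$). Taking the starting point to be $x$ and letting $N\to\infty$ immediately identifies the right-hand side of the claimed equality with $c$. For the left-hand side, fix $\ep>0$ and use the uniform convergence to pick $K\in\N$ with $|f_N(y)-c|<\ep$ for all $N\ge K$ and all $y\in X$; then for any $m,n$ with $n-m\ge K$ the identity above gives $|f_{n-m}(T^m x)-c|<\ep$ by taking $y=T^m x$, so the left-hand limit also equals $c$.

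The one conceptual point to stress is where unique ergodicity is actually used. In the limit $n-m\to\infty$ the initial index $m$ is allowed to drift to infinity, so one needs an estimate on $f_N(T^m x)$ that is uniform over the starting point $T^m x$, not merely pointwise along the orbit of $x$. This uniformity is precisely the content of the equivalence in Theorem~\ref{thm:unique-ergodic}, namely uniform convergence of the averages to a constant over all of $X$; supplying it is the whole role of the unique ergodicity hypothesis, and once it is in hand there is no further obstacle.
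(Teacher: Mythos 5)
Your argument is correct, but it follows a genuinely different route from the paper's. You reduce the segment average to an ordinary Ces\`aro average at a shifted starting point via $\frac{1}{n-m}\sum_{i=m}^{n-1}f(T^ix)=f_{n-m}(T^mx)$, and then conclude directly from condition (2) of Theorem~\ref{thm:unique-ergodic}: unique ergodicity gives \emph{uniform} convergence of $f_N$ to the constant $\int f\,\mathrm{d}\mu$ over all of $X$, which absorbs the drifting initial point $T^mx$. The paper instead argues by contradiction with empirical measures: if the segment averages had a limit point $c\neq\int f\,\mathrm{d}\mu$ along sequences $m_k,n_k$ with $n_k-m_k\to\infty$, one passes to a weak$^*$ limit $\nu$ of $\frac{1}{n_k-m_k}\sum_{i=m_k}^{n_k-1}\delta_{T^ix}$, observes that $\nu$ is invariant and hence equals $\mu$, and derives a contradiction. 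Your proof is shorter and more quantitative (it yields an explicit $K=K(\varepsilon)$ uniform in $m$), at the cost of invoking the full strength of the uniform-convergence characterization; the paper's proof uses only the softer facts that $M(X)$ is weak$^*$ compact and that limit points of such interval averages are invariant, which is the template the authors reuse repeatedly elsewhere in the paper (e.g.\ in the proofs of Theorems~\ref{general} and~\ref{thm:transitive-mean-eq}). Both are complete; your reindexing step and the uniformity point you flag at the end are exactly where the work lies.
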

\begin{proof}
Let $\mu$ be the unique invariant measure on $(X,T)$.
Then for any $f\in C(X, \mathbb{R})$ and $x\in X$,
\[ \lim_{n\to\infty}\frac{1}{n}\sum_{i=0}^{n-1}
f(T^ix)=\int f \textrm{d}\mu.\]
If the conclusion does not hold, then there exist $f\in C(X, \mathbb{R})$, $x\in X$ and two sequences $\{n_k\}$ and $\{m_k\}$ with
$n_k- m_k\to\infty $ such that
\[\lim_{n_k-m_k\to\infty}\frac{1}{n_k-m_k}\sum_{i=m_k}^{n_k-1}
f(T^ix) =c \neq \int f \textrm{d}\mu.\]
As $M(X)$ is compact, passing to a subsequence if necessary
we may assume that
\[ \lim_{k\to\infty}
\frac{1}{n_k-m_k}\sum_{i=m_k}^{n_k-1}\delta_{T^ix}
=\nu.\]
It is easy to see that $\nu$ is an invariant measure.
As $(X,T)$ is unqiuely ergodic then $\nu=\mu$.
So
\[\lim_{n_k-m_k\to\infty}\frac{1}{n_k-m_k}\sum_{i=m_k}^{n_k-1}
f(T^ix) = \int f\textrm{d}\mu.\]
This is a contradiction.
\end{proof}

\begin{proof}[Proof of Theorem~\ref{thm:Weyl-mean-equ}]
As $(X,T)$ is mean equicontinuous, then so is $(X\times X,T\times T)$.
Fix $(x,y)\in X\times X$.
By Theorem \ref{transitive}, $(\overline{Orb((x,y),T\times,T)},T\times T)$ is uniquely ergodic.
Now applying the above theorem to the distance function $d(\cdot,\cdot)$ and $(x,y)$, we get
\[
\lim_{n-m\to\infty}\frac{1}{n-m}\sum_{i=m}^{n-1}d(T^ix,T^iy)=
\lim_{n\to\infty}\frac{1}{n}\sum_{i=0}^{n-1}d(T^ix,T^iy).
\]
Then the result follows from the definition.
\end{proof}

We now give the following conclusion to end this section.

\begin{thm}
Let $(X,T)$ be a mean equicontinuous system, then
for every
$\varepsilon >0$, there are $\delta>0$ and $N>0$, such that whenever $d(x,y)<\delta$, we have
$$\frac{1}{n}\sum_{i=j}^{n+j-1} d(T^ix,T^iy)<\varepsilon$$
 for all $ n\geq N$ and $j\geq 0$.
\end{thm}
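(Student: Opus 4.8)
The plan is to reduce the window average to an ordinary Ces\`aro average of a shifted pair and then exploit the invariance of the limit function. First I would note that, since $(X,T)$ is mean equicontinuous, it is equicontinuous in the mean by Theorem \ref{general}. Hence Theorem \ref{thm:unifon-conv-equi}(3), applied to the continuous function $d(\cdot,\cdot)\in C(X\times X)$, tells us that the averages
\[
d_n(x,y):=\frac{1}{n}\sum_{i=0}^{n-1}d(T^ix,T^iy)
\]
converge uniformly on $X\times X$ to a $T\times T$-invariant continuous function $d^*\in C(X\times X)$.

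Second, I would record two properties of $d^*$. Since $d(x,x)=0$ for every $x$, the limit satisfies $d^*(x,x)=0$, so $d^*$ vanishes on the diagonal $\Delta_X$; by continuity of $d^*$ there is a $\delta>0$ with $d^*(x,y)<\varepsilon/2$ whenever $d(x,y)<\delta$. Separately, the uniform convergence provides an $N>0$ such that $|d_n(a,b)-d^*(a,b)|<\varepsilon/2$ for all $n\geq N$ and all $(a,b)\in X\times X$.

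The key step is the reindexing
\[
\frac{1}{n}\sum_{i=j}^{n+j-1}d(T^ix,T^iy)=\frac{1}{n}\sum_{i=0}^{n-1}d\bigl(T^i(T^jx),T^i(T^jy)\bigr)=d_n(T^jx,T^jy),
\]
which expresses the window average over $[j,j+n-1]$ as an ordinary Ces\`aro average of length $n$ for the shifted pair $(T^jx,T^jy)$. Applying the uniform bound to $(a,b)=(T^jx,T^jy)$ and using the $T\times T$-invariance of $d^*$, for $n\geq N$ we obtain
\[
\frac{1}{n}\sum_{i=j}^{n+j-1}d(T^ix,T^iy)<d^*(T^jx,T^jy)+\frac{\varepsilon}{2}=d^*(x,y)+\frac{\varepsilon}{2}.
\]
If moreover $d(x,y)<\delta$, then $d^*(x,y)<\varepsilon/2$ and the whole expression is below $\varepsilon$, for every $j\geq 0$ and every $n\geq N$, as required.

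I do not expect a genuine obstacle here: the entire argument rests on the uniform convergence to an invariant continuous limit supplied by Theorem \ref{thm:unifon-conv-equi}(3), and the only idea needed is the reindexing that converts the window starting at $j$ into the Ces\`aro average for $(T^jx,T^jy)$, at which point invariance of $d^*$ removes the dependence on $j$. The one point to verify with a little care is that the $\delta$ coming from continuity of $d^*$ and the $N$ coming from uniform convergence are both genuinely independent of $x$, $y$ and $j$, which is precisely what the uniform statements guarantee.
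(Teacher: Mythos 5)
Your argument is correct, and it takes a genuinely different route from the paper. The paper proves this theorem by contradiction, re-running the machinery of Theorem \ref{general}: it builds empirical measures $\mu_k=\frac{1}{n_k}\sum_{i=j_k}^{n_k+j_k-1}\delta_{(T^ix_k,T^iy_k)}$ along the offending windows, passes to a weak$^*$ limit $\mu$, shows $\mu(\supp(\mu)\setminus\Delta_X)>0$, extracts a minimal point $(u,v)$ off the diagonal via ergodic decomposition and Lemma \ref{minimal-point}, and then contradicts mean equicontinuity of $(X\times X,T\times T)$. You instead observe that, once Theorem \ref{general} upgrades mean equicontinuity to equicontinuity in the mean, Theorem \ref{thm:unifon-conv-equi}(3) applied to $d(\cdot,\cdot)\in C(X\times X)$ hands you uniform convergence of $d_n$ to a continuous $T\times T$-invariant $d^*$ vanishing on $\Delta_X$; the reindexing $\frac{1}{n}\sum_{i=j}^{n+j-1}d(T^ix,T^iy)=d_n(T^jx,T^jy)$ together with invariance of $d^*$ then kills the dependence on $j$, and uniform convergence plus uniform continuity of $d^*$ supply $N$ and $\delta$ independent of $x$, $y$, $j$. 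Your proof is shorter, direct rather than by contradiction, and makes transparent that the theorem is a formal consequence of results already established; as a bonus it immediately re-proves Theorem \ref{thm:Weyl-mean-equ}, since the window averages are shown to converge to $d^*(x,y)$ uniformly in $j$. What the paper's approach buys is independence from the unproved portion of Theorem \ref{thm:unifon-conv-equi} (the paper only writes out (1)$\Rightarrow$(2) and refers elsewhere for (2)$\Rightarrow$(3)), whereas your argument leans on implication (1)$\Rightarrow$(3) in an essential way. Both are valid within the paper's framework.
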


\begin{proof}
Assume that there are
$x_k,y_k,z\in X,n_k,j_k\in \Z,k=1,2,\cdots$ and $\ep_0>0$
such that $\lim_{k\rightarrow\infty }x_k=z=\lim_{k\rightarrow\infty }y_k$
and for every $k\in \Z$
$$
\frac{1}{n_k}\sum_{i=j_k}^{n_k+j_k-1}d(T^ix_k,T^iy_k)\geq \ep_0.
$$

Let $\mu_k=\frac{1}{n_k}\sum_{i=j_k}^{n_k+j_k-1}\delta_{(T^ix_k,T^iy_k)}$, and then $\mu_k\in M(X\times X)$.
We may assume $\mu_k\rightarrow \mu$(otherwise we may consider the subsequence),
where $\mu\in M(X\times X,T\times T)$.

We claim that $\mu(\text{supp}(\mu )\setminus \Delta_X)>0$.
In fact, $d(\cdot,\cdot)$ is a continuous function on $X\times X$, then we have
$$
\int_{X\times X}d(x,y)\textrm{d}\mu_k\longrightarrow \int_{X\times X}d(x,y)\textrm{d}\mu
$$
and
$$
\int_{X\times X}d(x,y)\textrm{d}\mu_k=\frac{1}{n_k}\sum_{i=j_k}^{n_k+j_k-1}d(T^ix_k,T^iy_k)\geq \ep_0
$$
which implies
$$\mu(\text{supp}(\mu)\setminus \Delta_X)>0.$$
By ergodic decomposition theorem, we have $\nu(\text{supp}(\mu)\setminus \Delta_X)>0$
for some ergodic measure $\nu$ on $X\times X $, thus there exists a minimal point $(u,v)$ in $\text{supp}(\mu)\setminus \Delta_X$
since $\text{supp}(\nu)$ is a minimal set by Lemma \ref{minimal-point}.

Let $B_l=\{(x,y)\in X\times X:d((x,y),(u,v))<\frac{1}{l} \}$.
Then we have
$$\mu(B_l)>0 \ \text{and}\
\mu_k(B_l)=\frac{1}{n_k}\#(\{j_k\leq i \leq n_k+j_k-1:(T^ix_k,T^iy_k)\in B_l\}).$$
Thus for any $l\in \Z$ there exist infinte $k\in \Z$ with $j_k\leq m_k \leq n_k+j_k-1$ such that $(T^{m_k}x_k,T^{m_k}y_k)\in B_l$,
since $0<\mu(B_l)\leq \liminf_{k\rightarrow\infty}\mu_k(B_l)$.

Put $$\delta=d(\overline{Orb((z,z),T\times T)},\overline{Orb((u,v),T\times T)})$$
then $\delta>0$, since $\overline{Orb((u,v),T\times T)}\cap \Delta_X=\emptyset$.
As $(X,T)$ is mean equicontinuous,
so is $(X\times X,T\times T)$ by Lemma \ref{same}. Thus,
for $\frac{1}{4}\delta$, there is $\eta>0$ such that if $d((x,y),(x',y'))<\eta$
then \[\limsup_{n\to\infty}\frac{1}{n}\sum_{i=0}^{n-1}d((T\times T)^i(x,y),(T\times T)^i(x',y'))<\frac{\delta}{4}.\]

There are infinitely many $k\in \Z$ with
 $d((x_k,y_k),(z,z))<\eta$ and $d((T^{m_k}x_k,T^{m_k}y_k),(u,v))<\eta$,
then
 \[\limsup_{n\to\infty}\frac{1}{n}\sum_{i=0}^{n-1}d((T\times T)^i(x_k,y_k),(T\times T)^i(z,z))<\frac{\delta}{4}\]
 and
  \[\limsup_{n\to\infty}\frac{1}{n}\sum_{i=0}^{n-1}d((T\times T)^i(T^{m_k}x_{k},T^{m_k}y_{k}),(T\times T)^i(u,v))<\frac{\delta}{4}\]
which implies
\[\limsup_{n\to\infty}\frac{1}{n}\sum_{i=0}^{n-1}d((T\times T)^i(T^{m_k}z,T^{m_k}z),(T\times T)^i(u,v))<\frac{\delta}{2}.\]
It is a contradiction which shows the theorem.
\end{proof}

\section{Mean equicontinuous relation}

It is well known that the equicontinuous structure relation
is the smallest closed invariant equivalence relation containing the regionally proximal relation.
In \cite{YL} the authors showed that each topological dynamical system admits a maximal mean equicontinuous factor.
Inspired by the above ideas, we now define a new notation called \emph{a pair sensitive in the mean}
and introduce the mean equicontinuous structure relation. We show that the maximal mean equicontinuous factor
is induced by the smallest invariant closed equivalence relation containing the relation of sensitivity in the mean.

\begin{defn}
Let $(X,T)$ be a dynamical system.
We say $(x,y)$ is \emph{a pair sensitive in the mean},
if $x=y$ or
for any $\tau>0$, there exists $c=c(\tau)>0$ such that
for every $\epsilon>0$,
there exist $x',y'\in X$ and $n\in \N$ such that
$d(x',y')<\ep$ and
$$ \frac{1}{n}\#(\{0\leq i\leq n-1:d(T^ix',x)<\tau,d(T^iy',y)<\tau\})>c$$
Let $Q_{me}(X,T)$ be the set of all pairs sensitive in the mean  in $(X,T)$,
and we call that  \emph{the relation of sensitivity in the mean}.
\end{defn}
Clearly, if $T$ is a homeomorphism, then $Q_{me}(X,T)\subset Q(X,T)$.
Let $S_{me}(X, T)$ be the smallest closed $T\times T$ invariant equivalence relation
such that $X/S_{me}(X, T)$ is a mean equicontinuous system.
We will show that $S_{me}(X, T)$ is the smallest closed $T\times T$ invariant equivalent relation that contains
the relation of sensitivity in the mean.
This will be done through the following lemmas.

First we observe that

\begin{lem} \label{NOT ME}
Let $(X,T)$ be a dynamical system.
Then $(X,T)$ is not mean equicontinuous system if and only if
there exists $x, x_{k}\in X, n_{k}\in \mathbb{N}$ and $\varepsilon_{0}>0$,
such that $x_{k}\rightarrow x, \frac{1}{n_{k}}\sum_{i=0}^{n_{k}-1}d(T^{i}x,T^{i}x_{k})\geq \varepsilon_{0}$.
\end{lem}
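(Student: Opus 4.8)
The plan is to prove the two implications separately, with the understanding that the substantive point is to relate the finite-average condition in the statement (control of $\bar d_{n_k}$ at a single scale $n_k$) to the $\limsup$ condition in the definition of mean equicontinuity; the bridge between the two will be Theorem \ref{general}. The implication ``not mean equicontinuous $\Rightarrow$ such a sequence exists'' is elementary and rests only on compactness of $X$ together with the triangle inequality, while the reverse implication is where Theorem \ref{general} is needed.

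For the first direction I would unwind the definition. If $(X,T)$ is not mean equicontinuous, there is $\ep_0>0$ such that for every $k\in\N$ we can choose $u_k,v_k\in X$ with $d(u_k,v_k)<1/k$ and $\limsup_{n\to\infty}\bar d_n(u_k,v_k)\ge\ep_0$. By compactness of $X$, after passing to a subsequence I may assume $u_k\to x$ for some $x\in X$, and since $d(u_k,v_k)\to 0$ we also get $v_k\to x$. The statement singles out the common limit $x$ as one endpoint, so I would pass from the pair $(u_k,v_k)$ to a comparison against $x$: from $\bar d_n(u_k,v_k)\le \bar d_n(u_k,x)+\bar d_n(x,v_k)$ and subadditivity of $\limsup$, at least one of $\limsup_{n}\bar d_n(u_k,x)$ and $\limsup_{n}\bar d_n(x,v_k)$ is $\ge \ep_0/2$. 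Let $w_k\in\{u_k,v_k\}$ be the corresponding point, so that $w_k\to x$ and $\limsup_{n}\bar d_n(x,w_k)\ge \ep_0/2$. This $\limsup$ bound then produces, for each $k$, an index $n_k$ with $\bar d_{n_k}(x,w_k)\ge \ep_0/4$. Relabeling $x_k:=w_k$ and $\ep_0':=\ep_0/4$ gives exactly the asserted sequence.

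For the converse I would argue by contraposition: assuming $(X,T)$ is mean equicontinuous, I show no such sequence can exist. By Theorem \ref{general}, $(X,T)$ is then equicontinuous in the mean, so applying the definition with $\ep_0$ yields $\delta>0$ such that $\bar d_n(u,v)<\ep_0$ for \emph{all} $n$ and all $u,v\in X$ with $d(u,v)<\delta$. If a sequence as in the statement existed, then $x_k\to x$ would give $d(x_k,x)<\delta$ for all large $k$, forcing $\bar d_{n_k}(x,x_k)<\ep_0$ and contradicting $\bar d_{n_k}(x,x_k)\ge \ep_0$.

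The hard part, and the only non-elementary step, is precisely this appeal to Theorem \ref{general}. The hypothesis of the lemma constrains only a single finite average, at the moving time $n_k$ for the moving point $x_k$, whereas mean equicontinuity is phrased through $\limsup_n$ for a fixed pair; there is no direct way to upgrade one finite-average violation into a $\limsup$ violation. The equivalence of mean equicontinuity with equicontinuity in the mean is exactly what converts uniform control of finite averages back into the required form, so at bottom this lemma is a sequential restatement of that equivalence.
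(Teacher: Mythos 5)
Your proof is correct. The paper offers no argument for this lemma at all (it is introduced with ``First we observe that''), so there is nothing to compare against line by line; your write-up is a legitimate filling-in of the omitted proof. The two points that actually need care are both handled properly: for the ``only if'' direction, the negation of mean equicontinuity only produces pairs $(u_k,v_k)$ with $d(u_k,v_k)<1/k$ and $\limsup_n \bar d_n(u_k,v_k)\ge\ep_0$, and your compactness-plus-triangle-inequality step (one of $\limsup_n\bar d_n(u_k,x)$, $\limsup_n\bar d_n(x,v_k)$ is at least $\ep_0/2$, then extract $n_k$) is exactly what is needed to pin one endpoint at the limit $x$; for the ``if'' direction, you are right that a single large finite average $\bar d_{n_k}(x,x_k)\ge\ep_0$ at a moving time $n_k$ cannot be upgraded to a $\limsup$ violation for a fixed pair, so the appeal to Theorem \ref{general} (mean equicontinuity $\Leftrightarrow$ equicontinuity in the mean) is genuinely necessary and involves no circularity, since that theorem is proved earlier and independently. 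One could alternatively invoke Theorem \ref{general} in the forward direction as well (negating equicontinuity in the mean directly yields the finite averages), but your more elementary route for that half is equally valid.
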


It is easy to check:

\begin{lem}
 Let $\pi:(X,T)\rightarrow (Y,S)$ be a factor map.
 If $(x,y)\in Q_{me}(X, T)$, then we have $(\pi(x),\pi(y))\in Q_{me}(Y,S)$.
\end{lem}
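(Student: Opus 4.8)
The plan is to exploit uniform continuity of the factor map $\pi$, which holds automatically since $X$ is compact, together with the equivariance relation $\pi\circ T^i=S^i\circ\pi$. First I would dispose of the trivial case: if $\pi(x)=\pi(y)$ (in particular if $x=y$), then the pair $(\pi(x),\pi(y))$ falls under the diagonal clause of the definition and there is nothing to check. Hence I may assume $\pi(x)\neq\pi(y)$, which forces $x\neq y$, so that the quantitative density clause in the definition of $Q_{me}(X,T)$ is available for $(x,y)$.

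Now fix $\tau'>0$, the tolerance to be controlled in the target system $(Y,S)$. By uniform continuity of $\pi$ there is $\tau>0$ such that $d(a,b)<\tau$ implies $d(\pi(a),\pi(b))<\tau'$ for all $a,b\in X$. Applying the hypothesis $(x,y)\in Q_{me}(X,T)$ to this $\tau$ produces a constant $c=c(\tau)>0$, and I would declare the target constant to be $c'(\tau'):=c(\tau)$. Next, given any $\ep'>0$, a second application of uniform continuity yields $\ep>0$ with $d(a,b)<\ep\Rightarrow d(\pi(a),\pi(b))<\ep'$; feeding this $\ep$ into the hypothesis gives $x',y'\in X$ and $n\in\N$ with $d(x',y')<\ep$ and
\[\frac{1}{n}\#(\{0\leq i\leq n-1: d(T^ix',x)<\tau,\ d(T^iy',y)<\tau\})>c.\]

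The last step transports these witnesses to $Y$ by setting $u'=\pi(x')$ and $v'=\pi(y')$. The closeness $d(u',v')<\ep'$ is immediate from $d(x',y')<\ep$ and the choice of $\ep$. For the density estimate I would use $\pi(T^ix')=S^iu'$ and $\pi(T^iy')=S^iv'$ together with the choice of $\tau$: whenever $d(T^ix',x)<\tau$ and $d(T^iy',y)<\tau$, one gets $d(S^iu',\pi(x))<\tau'$ and $d(S^iv',\pi(y))<\tau'$. This yields the inclusion of index sets
\[\{0\leq i\leq n-1: d(T^ix',x)<\tau,\ d(T^iy',y)<\tau\}\subseteq\{0\leq i\leq n-1: d(S^iu',\pi(x))<\tau',\ d(S^iv',\pi(y))<\tau'\},\]
so the corresponding density computed in $Y$ is at least $c=c'(\tau')$. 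As $\tau'$ and $\ep'$ were arbitrary, this is exactly the definition of $Q_{me}(Y,S)$ for the pair $(\pi(x),\pi(y))$.

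The argument is essentially routine, and the closest thing to an obstacle is purely a matter of bookkeeping: one must apply uniform continuity twice in the correct direction (from $X$ to $Y$), once to shrink the target window $\tau'$ to a source window $\tau$ and once to pass the $\ep$-closeness, and then confirm that the constant $c$ transfers without loss across the index-set inclusion. No estimate is genuinely delicate, so I expect the only real care to lie in not confusing which metric and which tolerance each inequality refers to.
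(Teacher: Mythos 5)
Your proof is correct, and it is precisely the routine uniform-continuity argument the paper has in mind when it states this lemma with the remark ``it is easy to check'' and omits the proof. The case split on $\pi(x)=\pi(y)$, the choice $c'(\tau')=c(\tau)$, and the index-set inclusion all check out.
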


\begin{lem} \label{ME-Q}
 Let $(X,T)$ be a dynamical system, then $(X,T)$ is mean equicontinuous if and only if
 $Q_{me}(X, T)=\Delta_X.$
\end{lem}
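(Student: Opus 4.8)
The plan is to prove both implications, noting that $\Delta_X\subseteq Q_{me}(X,T)$ always holds by the ``$x=y$'' clause of the definition, so the content of the lemma is the interplay between mean equicontinuity and the presence of off-diagonal sensitive pairs. \textbf{Forward direction.} Suppose $(X,T)$ is mean equicontinuous; by Theorem~\ref{general} it is equicontinuous in the mean. Since every diagonal pair lies in $Q_{me}(X,T)$, it suffices to show that no pair $(x,y)$ with $x\ne y$ is sensitive in the mean. Suppose otherwise and set $\tau=d(x,y)/3$; the definition furnishes $c=c(\tau)>0$ with the stated density property. Whenever $d(T^ix',x)<\tau$ and $d(T^iy',y)<\tau$, the triangle inequality yields $d(T^ix',T^iy')\ge d(x,y)-2\tau=\tau$, so on the index set appearing in the definition the Cesàro average $\frac{1}{n}\sum_{i=0}^{n-1}d(T^ix',T^iy')$ exceeds $\tau c$. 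But equicontinuity in the mean provides, for the error level $\tau c$, a $\delta>0$ such that $d(x',y')<\delta$ forces $\frac{1}{n}\sum_{i=0}^{n-1}d(T^ix',T^iy')<\tau c$ for all $n$; choosing $\epsilon<\delta$ in the definition produces a pair violating this bound, a contradiction. Hence $Q_{me}(X,T)\subset\Delta_X$, and equality follows.

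\textbf{Reverse direction.} I argue by contraposition: assuming $(X,T)$ is not mean equicontinuous, I produce an off-diagonal sensitive pair. Lemma~\ref{NOT ME} supplies $x$, points $x_k\to x$, integers $n_k$ and $\ep_0>0$ with $\frac{1}{n_k}\sum_{i=0}^{n_k-1}d(T^ix,T^ix_k)\ge\ep_0$. Passing to a subsequence, the empirical measures $\mu_k=\frac{1}{n_k}\sum_{i=0}^{n_k-1}\delta_{(T^ix,T^ix_k)}$ converge weak$^*$ to some $\mu\in M(X\times X,T\times T)$, and integrating the continuous distance function gives $\int_{X\times X} d\,\mathrm d\mu\ge\ep_0>0$, so $\supp(\mu)\not\subset\Delta_X$; fix $(u,v)\in\supp(\mu)$ with $u\ne v$. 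To see that $(u,v)$ is sensitive in the mean, fix $\tau>0$ and consider the open set $B=\{(p,q)\in X\times X:d(p,u)<\tau,\ d(q,v)<\tau\}$. Since $(u,v)\in\supp(\mu)$ we have $\mu(B)>0$; put $c(\tau)=\tfrac12\mu(B)$. Lower semicontinuity of measure on open sets gives $\liminf_k\mu_k(B)\ge\mu(B)>c(\tau)$, while by construction $\mu_k(B)=\frac{1}{n_k}\#\{0\le i\le n_k-1:d(T^ix,u)<\tau,\ d(T^ix_k,v)<\tau\}$. Thus for any prescribed $\epsilon>0$ we may pick $k$ so large that simultaneously $d(x,x_k)<\epsilon$ and $\mu_k(B)>c(\tau)$, and then $x'=x$, $y'=x_k$, $n=n_k$ witness the defining inequality. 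Hence $(u,v)\in Q_{me}(X,T)\setminus\Delta_X$.

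\textbf{Main obstacle.} The forward implication is routine once Theorem~\ref{general} converts mean equicontinuity into equicontinuity in the mean. The substance lies in the reverse direction: the task is to pin down a single off-diagonal point $(u,v)$ whose neighborhoods retain a $\tau$-dependent but $\epsilon$-independent lower bound on the empirical densities along the fixed sequence $x_k\to x$. Selecting $(u,v)$ inside $\supp(\mu)$ guarantees $\mu(B)>0$ for every $\tau$, and lower semicontinuity on open sets propagates this positivity to the $\mu_k$; the only bookkeeping point is that the index window $0\le i\le n_k-1$ of $\mu_k$ matches the window $0\le i\le n-1$ in the definition, which holds upon taking $n=n_k$.
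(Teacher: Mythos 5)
Your proof is correct and takes essentially the same route as the paper: for the reverse direction both arguments build the empirical measures $\mu_k$ from the failure of mean equicontinuity (Lemma~\ref{NOT ME}), pass to a weak$^*$ limit $\mu$, locate an off-diagonal point of $\supp(\mu)$, and use lower semicontinuity of $\mu_k$ on an open neighborhood to produce a sensitive-in-the-mean pair with $c(\tau)$ half the $\mu$-mass of that neighborhood. The only difference is that the paper dismisses the forward direction as clear, while you spell it out via Theorem~\ref{general} and the triangle inequality; that is a welcome elaboration rather than a different approach.
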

\begin{proof}
If $(X,T)$ is mean equicontinuous, it is clear that $Q_{me}(X, T)=\Delta_X$.

Conversely, assume that $Q_{me}(X, T)=\Delta_X$. Suppose that $(X,T)$ is not mean equicontinuous.
By Lemma ~\ref{NOT ME}, there are $x_k,x \in X,n_k\in\Z$ and $\ep_0>0$
such that $\lim_{k\rightarrow\infty}d(x_k,x)=0$
 and for every $k\in \N$, we have
 $$
 \frac{1}{n_k}\sum_{i=0}^{n_k-1}d(T^ix_k,T^ix)\geq\ep_0.
 $$
Let $\mu_k=\frac{1}{n_k}\sum_{i=0}^{n_k-1}\delta_{(T^ix_k,T^ix)}$, then $\mu_k\in M(X\times X)$,
we may assume $\mu_k\rightarrow \mu$(otherwise we may consider the subsequence),
where $\mu\in M(X\times X,T\times T)$.
 From the proof of Theorem ~\ref{general}, it follows that $\mu(\text{supp}(\mu)\setminus \Delta_X)>0$.

Let $(y,z)\in \text{supp}(\mu)\setminus \Delta_X$.
Fix $\tau>0$, choose $l\in\N$ such that $\frac{1}{l}<\tau$.
Let $B_l=\{(u,v)\in X\times X:d((y,z),(u,v))<\frac{1}{l} \}$,
where $d((y,z),(u,v))=d(y,u)+d(z,v)$
then
$$
\mu(B_l)>0  \ \text{and}\
\mu_k(B_l)=\frac{1}{n_k}\#(\{0\leq i \leq n_k-1:d(T^ix_k,y)+d(T^ix,z)<\frac{1}{l}\}).
$$
There exist infinite $k\in \Z$ such that $\mu_k(B_l)>\frac{\mu(B_l)}{2}>0$
since $0<\mu(B_l)\leq \liminf_{k\rightarrow\infty}\mu_k(B_l)$.
For $\ep>0$, we can choose $k\in \Z$ satisfying above proposition with $d(x_k,x)<\ep$,
hence
$$
\frac{1}{n_k}\#(\{0\leq i \leq n_k-1:d(T^ix_k,y)<\tau,d(T^ix,z)<\tau\})>\frac{\mu(B_l)}{2}.
$$
It follows that $(y,z)\in Q_{me}(X, T)$.
It is a contradiction which implies the lemma.
\end{proof}

\begin{lem}\label{mean equicontinuous factor}
Let $(X,T)$ be a dynamical system and $\mathcal{A}(Q_{me}(X, T))$ be the
smallest closed $T\times T$ invariant equivalence relation containing $Q_{me}(X, T)$,
then $X/\mathcal{A}(Q_{me}(X, T))$ is the maximal mean equicontinuous factor.
\end{lem}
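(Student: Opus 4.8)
The plan is to verify the two defining properties of a maximal mean equicontinuous factor for $W:=X/\mathcal{A}(Q_{me}(X,T))$: that $(W,T)$ is itself mean equicontinuous, and that every mean equicontinuous factor of $(X,T)$ factors through it. Write $\pi\colon (X,T)\to (W,T)$ for the quotient map, and let $d_W$ be a metric on $W$, so that $R_\pi=\mathcal{A}(Q_{me}(X,T))$. The universal property is the routine half. Given any factor map $\phi\colon (X,T)\to (Y,S)$ onto a mean equicontinuous system $(Y,S)$, I would use that factor maps carry pairs sensitive in the mean to pairs sensitive in the mean, so $(\phi\times\phi)(Q_{me}(X,T))\subseteq Q_{me}(Y,S)$; since $Q_{me}(Y,S)=\Delta_Y$ by Lemma \ref{ME-Q}, this forces $Q_{me}(X,T)\subseteq R_\phi$. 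As $R_\phi$ is a closed $T\times T$ invariant equivalence relation and $\mathcal{A}(Q_{me}(X,T))$ is by definition the smallest such relation containing $Q_{me}(X,T)$, we get $\mathcal{A}(Q_{me}(X,T))\subseteq R_\phi$, whence $\phi$ is constant on the classes of $\mathcal{A}(Q_{me}(X,T))$ and factors as $\phi=\psi\circ\pi$ for a continuous factor map $\psi\colon(W,T)\to(Y,S)$.

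The substantive half is to show that $(W,T)$ is mean equicontinuous; by Lemma \ref{ME-Q} it suffices to prove $Q_{me}(W,T)=\Delta_W$, and I would argue by contradiction. If $(W,T)$ is not mean equicontinuous, then Lemma \ref{NOT ME} applied to $(W,T)$ produces points $\bar x_k\to\bar x$ in $W$, integers $n_k$, and $\ep_0>0$ with $\frac{1}{n_k}\sum_{i=0}^{n_k-1} d_W(T^i\bar x,T^i\bar x_k)\ge \ep_0$. I would choose lifts $x_k\in\pi^{-1}(\bar x_k)$ and, using compactness of $X$, pass to a subsequence with $x_k\to x$ for some $x\in X$; continuity of $\pi$ then gives $\pi(x)=\bar x$. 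The main obstacle appears right here: $\pi$ need not preserve the metric, so closeness in $W$ does not lift to closeness in $X$, and one cannot directly transplant a witnessing configuration for $Q_{me}(W)$ into a witnessing configuration for $Q_{me}(X)$. The device that resolves this is the continuous function $g(a,b)=d_W(\pi(a),\pi(b))$ on $X\times X$, which vanishes precisely on $R_\pi=\mathcal{A}(Q_{me}(X,T))$; it transports the failure of mean equicontinuity on $W$ into a statement about an invariant measure on $X\times X$.

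Concretely, I would set $\lambda_k=\frac{1}{n_k}\sum_{i=0}^{n_k-1}\delta_{(T^ix,\,T^ix_k)}\in M(X\times X)$ and pass to a subsequence with $\lambda_k\to\lambda\in M(X\times X,T\times T)$. Since $g$ is continuous and $\int g\,\mathrm{d}\lambda_k=\frac{1}{n_k}\sum_{i=0}^{n_k-1} d_W(T^i\bar x,T^i\bar x_k)\ge \ep_0$, passing to the limit yields $\int g\,\mathrm{d}\lambda\ge\ep_0>0$, so $\lambda\big((X\times X)\setminus\mathcal{A}(Q_{me}(X,T))\big)=\lambda(\{g>0\})>0$. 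Because $\mathcal{A}(Q_{me}(X,T))$ is closed, its complement meets $\supp(\lambda)$, giving a pair $(y,z)\in\supp(\lambda)$ with $(y,z)\notin\mathcal{A}(Q_{me}(X,T))$, and in particular $(y,z)\notin\Delta_X$. Now $\lambda$ is a weak$^*$ limit of the empirical measures $\lambda_k$ built from the constant base point $x$ and the sequence $x_k\to x$, which is exactly the configuration used in the proof of Lemma \ref{ME-Q} (with the two coordinates interchanged, which is immaterial). The very same argument then shows that every pair in $\supp(\lambda)\setminus\Delta_X$ lies in $Q_{me}(X,T)$, so $(y,z)\in Q_{me}(X,T)\subseteq\mathcal{A}(Q_{me}(X,T))$, contradicting $(y,z)\notin\mathcal{A}(Q_{me}(X,T))$.

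Thus $Q_{me}(W,T)=\Delta_W$, so $(W,T)$ is mean equicontinuous by Lemma \ref{ME-Q}, and together with the universal property established above this identifies $(W,T)$ as the maximal mean equicontinuous factor. I expect the only delicate point to be the metric mismatch between $X$ and $W$ in the contradiction step, which the pull-back function $g$ is designed to circumvent; everything else (existence and invariance of the limit measure $\lambda$, the portmanteau estimate $\lambda(\{g>0\})>0$, and the reuse of the argument in Lemma \ref{ME-Q}) is routine and parallels the empirical-measure computations already carried out in the proofs of Theorem \ref{general} and Lemma \ref{ME-Q}.
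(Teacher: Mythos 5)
Your argument is correct, and while it shares the paper's core mechanism, the substantive half follows a genuinely different (and arguably more robust) route. The paper does not apply Lemma \ref{NOT ME} to the quotient; instead it uses Lemma \ref{ME-Q} to extract a nondiagonal pair $(x,y)\in Q_{me}(Y,T)$ downstairs, unpacks the definition of sensitivity in the mean to obtain witnessing pairs $(x_k,y_k)$ in $Y$, lifts these to pairs $(u_k,v_k)$ in $X$, and builds empirical measures on \emph{both} $X\times X$ and $Y\times Y$; to push the lower bound on the mean distance upstairs it first replaces the metric on $X$ by one satisfying $d(u,v)\ge d(\pi(u),\pi(v))$, and the contradiction is reached by showing $\supp(\mu)\subseteq Q_{me}(X,T)$, hence $(\pi\times\pi)(\supp(\mu))\subseteq\Delta_Y$ and $\nu(\Delta_Y)=1$, against $\nu(\supp(\nu)\setminus\Delta_Y)>0$. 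Your pull-back function $g=d_W\circ(\pi\times\pi)$ achieves the same transfer without changing the metric, and your single-lift configuration $(x,x_k)$ with $x=\lim x_k$ sidesteps a delicate point in the paper: there one needs $d(u_k,v_k)\to 0$ for the $Q_{me}$-argument on $\supp(\mu)$ to apply, and the paper's ``without loss of generality $\lim u_k=w=\lim v_k$'' is not automatic, since nearby fibers of $\pi$ may only admit far-apart lifts. What is identical in both proofs is the engine: empirical measures, the lower semicontinuity estimate $\liminf_k\lambda_k(B_l)\ge\lambda(B_l)$ on open balls, and the conclusion that every point of the support of the limit measure is a pair sensitive in the mean and therefore lies in $\mathcal{A}(Q_{me}(X,T))$, which is incompatible with the limit measure giving positive mass off that relation. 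You also verify the universal property explicitly from the two preceding lemmas, a step the paper leaves implicit in the word ``maximal.''
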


\begin{proof}
Let $Y=X/\mathcal{A}(Q_{me}(X, T))$ and $\pi:X\rightarrow Y$ be the factor map.
As $\pi\colon X\to Y$ is a continuous surjective,
we can choose a metric on $X$ (also denoted by $d$)
such that $d(x,y)\geq d(\pi(x),\pi(y))$  for all $x,y\in X$.

Assume that $(Y,T)$ is not mean equicontinuous.
By Lemma ~\ref{ME-Q}, there exist $x,y\in Y$ with $x\neq y$ and $(x,y)\in Q_{me}(Y,T)$.
Let $\tau<\frac{1}{4}d(x,y)$.
For $k\in \N$, there are $x_k,y_k\in Y$ and $n_k\in \Z$ with $d(x_k,y_k)<\frac{1}{k}$ such that
$$
\frac{1}{n_k}\#(\{0\leq i \leq n_k-1:d(T^ix_k,x)<\tau,d(T^iy_k,y)<\tau\})>c
$$
for some $c>0$.
For every $k\in \N$,  choose $u_k,v_k\in X$ such that $\pi(u_k)=x_k,\pi(v_k)=y_k$.
Without loss of generality, we can assume that
$\lim_{k\rightarrow\infty}x_k=z=\lim_{k\rightarrow\infty}y_k$
and
$\lim_{k\rightarrow\infty}u_k=w=\lim_{k\rightarrow\infty}v_k$,
then $\pi(w)=z$.
$$\frac{1}{n_k}\sum_{i=0}^{n_k-1}d(T^iu_k,T^iv_k)\geq
\frac{1}{n_k}\sum_{i=0}^{n_k-1}d(T^ix_k,T^iy_k)\geq
c\cdot(d(x,y)-2\tau)>\frac{c}{2}\cdot d(x,y)>0.$$
Let $\mu_k=\frac{1}{n_k}\sum_{i=0}^{n_k-1}\delta_{(T^iu_k,T^iv_k)}$
and  $\nu_k=\frac{1}{n_k}\sum_{i=0}^{n_k-1}\delta_{(T^ix_k,T^iy_k)}$.
Without loss of generality, assume that $\mu_k\rightarrow \mu$
and $\nu_k\rightarrow \nu$, where $\mu\in M(X\times X,T\times T)$
and $\nu\in M(Y\times Y,T\times T)$.
From the proof of Theorem ~\ref{general}, we have
$\mu(\text{supp}(\mu)\setminus \Delta_X)>0$ and $\nu(\text{supp}(\nu)\setminus \Delta_Y)>0$.
Moreover $\pi\times \pi(\mu)=\nu$ since $\pi\times \pi(\mu_k)=\nu_k$.

If $(a,b)\in \text{supp}(\mu) $, by the proof of Theorem ~\ref{general}, we have
$(a,b)\in Q_{me}(X,T)$ which implies $\pi(a)=\pi(b)$, hence $\pi\times \pi (\text{supp}(\mu))=\Delta_Y$.

Therefore $\nu(\Delta_Y)=\pi\times \pi(\mu)(\Delta_Y)=\mu((\pi\times \pi)^{-1}\Delta_Y)=
\mu(\text{supp}(\mu))=1$.

It is a contradiction which implies the lemma.
\end{proof}

By Lemma \ref{ME-Q} and Lemma \ref{mean equicontinuous factor}, we have the main result in this section.

\begin{thm} \label{MAX ME}
Let $(X,T)$ be a dynamical system. Then
 $S_{me}(X, T)$ is  the smallest closed $T\times T$ invariant equivalence relation containing $Q_{me}(X, T)$.
\end{thm}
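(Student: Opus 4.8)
The plan is to prove the equality of the two relations by a double inclusion, using the two lemmas just established as the workhorses; the theorem itself is then essentially a formal wrap-up.

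First I would show $S_{me}(X,T)\subseteq \mathcal{A}(Q_{me}(X,T))$. By Lemma \ref{mean equicontinuous factor}, the quotient $X/\mathcal{A}(Q_{me}(X,T))$ is the maximal mean equicontinuous factor; in particular it is a mean equicontinuous system. Thus $\mathcal{A}(Q_{me}(X,T))$ is a closed $T\times T$ invariant equivalence relation whose quotient is mean equicontinuous, so it lies in the family of relations over which $S_{me}(X,T)$ is taken as the infimum. Since $S_{me}(X,T)$ is by definition the smallest relation in that family, we get $S_{me}(X,T)\subseteq \mathcal{A}(Q_{me}(X,T))$.

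For the reverse inclusion, since $\mathcal{A}(Q_{me}(X,T))$ is the smallest closed $T\times T$ invariant equivalence relation containing $Q_{me}(X,T)$, it suffices to verify that $S_{me}(X,T)$ already contains $Q_{me}(X,T)$. Let $\sigma\colon X\to Z:=X/S_{me}(X,T)$ be the quotient factor map. By the definition of $S_{me}(X,T)$ the system $(Z,T)$ is mean equicontinuous, so Lemma \ref{ME-Q} gives $Q_{me}(Z,T)=\Delta_Z$. Now take any $(x,y)\in Q_{me}(X,T)$. Applying the push-forward property of the relation of sensitivity in the mean under factor maps (the lemma immediately preceding Lemma \ref{ME-Q}) to $\sigma$, we obtain $(\sigma(x),\sigma(y))\in Q_{me}(Z,T)=\Delta_Z$, that is $\sigma(x)=\sigma(y)$, which means $(x,y)\in S_{me}(X,T)$. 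Hence $Q_{me}(X,T)\subseteq S_{me}(X,T)$, and because $S_{me}(X,T)$ is itself a closed $T\times T$ invariant equivalence relation, minimality of $\mathcal{A}(Q_{me}(X,T))$ forces $\mathcal{A}(Q_{me}(X,T))\subseteq S_{me}(X,T)$.

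Combining the two inclusions yields $S_{me}(X,T)=\mathcal{A}(Q_{me}(X,T))$, as desired. The argument is purely formal once the two lemmas are available; the only step needing care is the second inclusion, where one must invoke the push-forward lemma in exactly the right direction---\emph{pairs sensitive in the mean map to pairs sensitive in the mean}---so that the mean equicontinuity of the quotient $Z$, expressed through $Q_{me}(Z,T)=\Delta_Z$, can collapse every pair of $Q_{me}(X,T)$ onto the diagonal. I do not expect a serious obstacle here, since the genuine content has already been absorbed into Lemma \ref{ME-Q} and Lemma \ref{mean equicontinuous factor}.
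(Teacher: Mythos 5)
Your proof is correct and follows essentially the same route as the paper, which simply states that the theorem follows from Lemma \ref{ME-Q} and Lemma \ref{mean equicontinuous factor} without writing out the details. Your double-inclusion argument (using Lemma \ref{mean equicontinuous factor} for $S_{me}\subseteq\mathcal{A}(Q_{me})$, and the push-forward lemma together with $Q_{me}(Z,T)=\Delta_Z$ for the reverse) is exactly the formal wrap-up the authors leave implicit.
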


\medskip
\noindent {\bf Acknowledgments.}
The authors would like to thank Jian Li and Xiangdong Ye for bringing us the questions and for useful discussions when doing the research.
We also thank Jie Li for the careful reading which help the writing of the paper. Finally the authors thank the referee for his/her careful reading.

The authors were supported by NNSF of China (11431012).

\bibliographystyle{amsplain}

\end{document}